\documentclass{birkau}

\usepackage[utf8]{inputenc}
\usepackage[T1]{fontenc}
\usepackage{amsmath,amssymb,url}
\usepackage{enumitem}
\usepackage{needspace}
\numberwithin{equation}{section}

\makeatletter
\let\orig@enddoc@text\enddoc@text
\def\enddoc@text{\needspace{16\baselineskip}\orig@enddoc@text}
\makeatother

\theoremstyle{plain}
\newtheorem{theorem}{Theorem}[section]
\newtheorem{lemma}[theorem]{Lemma}
\newtheorem{proposition}[theorem]{Proposition}
\newtheorem{corollary}[theorem]{Corollary}
\newtheorem{example}[theorem]{Example}

\theoremstyle{definition}
\newtheorem{definition}[theorem]{Definition}

\newtheorem{remark}[theorem]{Remark}

\newtheorem{problem}[theorem]{Problem}

\newcommand{\N}{\mathbb{N}}
\newcommand{\R}{\mathbb{R}}
\newcommand{\dd}{^{d}}   % disjoint complement, used as X\dd

\title[Truncation bands, unitality, and homomorphisms]{Truncation bands, unitality, and\\ homomorphisms in truncated Riesz spaces:\\ Boulabiar's problems and beyond}

\author[M. Habibi]{Mohamed Habibi}
\address{Laboratoire de Recherche LATAO\\D\'epartement de Math\'ematiques\\Facult\'e des Sciences de Tunis\\Universit\'e de Tunis El Manar\\2092 El Manar\\Tunisia}
\email{mohamed.habibi@ipest.ucar.tn}

\corrauthor[H. Hafsi]{Hamza Hafsi}
\address{Preparatory Institute for Engineering Studies of Tunis\\University of Tunis\\1089 Tunis\\Tunisia}
\email{hafsi.hamza1@gmail.com,Hamza.Hafsi@ipeit.rnu.tn }
\subjclass{46A40, 06F20, 46B42, 47B65}

\keywords{truncated Riesz space, truncation band, projection property, canonical unitization, AM-space, truncation homomorphism}

\begin{document}

\begin{abstract}
We answer three questions from Boulabiar's survey on truncated Riesz spaces. First, the truncation band projection property coincides with the principal projection property, with no completeness or Archimedean hypothesis needed, and the band projection onto a truncation band is then given explicitly by the truncation itself. Second, a truncation on any Riesz space is unital exactly when its fixed-point set has a supremum. Hence bounded truncations on $KB$-spaces are unital, and a nonunital truncated Banach lattice with bounded truncation contains a closed sublattice copy of $c_0$ and is never an $AL$-space, while on the $AM$ side the smallest truncation unitization norm is always an $M$-norm, though the largest one need not be. Third, a truncation homomorphism whose domain truncation is Archimedean need not be a Riesz homomorphism, as an explicit counterexample built from $c_0$ and $c_0/c_{00}$ shows, but it is one modulo truncation infinitesimals whenever the codomain space is Archimedean.
\end{abstract}

\maketitle

% ==============================================================
% 1. INTRODUCTION
% ==============================================================
\section{Introduction}
\label{sec:intro}

A \emph{truncation} on a Riesz space $E$ with positive cone $E^{+}$ is a unary operation $\tau : E^{+} \to E^{+}$ satisfying the lattice-exchange identity $\tau(f) \wedge g = f \wedge \tau(g)$ for all $f, g \in E^{+}$. Ball \cite{Ball1,Ball2} introduced the notion as an intrinsic replacement for the classical device of an external distinguished weak unit. A truncated Riesz space $E \subseteq \R^X$ that is stable under $f \mapsto f \wedge \mathbf 1$, for a fixed function $\mathbf 1$, need not itself contain $\mathbf 1$. Yet the truncation alone is enough to recover a Yosida-type representation of $E$ by continuous functions that vanish appropriately at infinity, with no need to assume that $E$ has a unit.

This single axiom, refined and simplified in the form above, has since generated a substantial theory. A structure theorem realizes every truncation on an Archimedean Riesz space as $\tau(f) = pf + e \wedge f$ inside a universal completion \cite{Boulabiar-structure}. Representation theorems by continuous functions, together with Stone--Weierstrass type density results, are available in \cite{Boulabiar-survey}. Every truncated Riesz space $E$ whose truncation is weak, in the sense recalled in Section~\ref{subsec:truncations}, has a canonical (or Alexandroff) unitization $E \oplus \R$: a unital truncated Riesz space that contains $E$ as a maximal ideal and has a universal property among such unitizations \cite{Boulabiar-ElAdeb,Boulabiar-Hafsi-Mahfoudhi}. A normed refinement describes the extremal lattice norms on $E \oplus \R$ that extend a given norm on $E$ \cite{Boulabiar-Hafsi}. The transfer of order and completeness properties between $E$ and $E \oplus \R$ — the Archimedean property, relative uniform completeness, Dedekind and lateral completeness, universal completeness, and the projection property — is studied in \cite{Habibi-Hafsi}. Positive operators compatible with a truncation are studied in \cite{Boulabiar-Hajji-MJM}. Boulabiar's survey \cite{Boulabiar-survey} organizes this body of work and closes with six open problems.

The purpose of this paper is to answer three open problems from Boulabiar's survey \cite{Boulabiar-survey}, and to complement each answer with further results.

Our first contribution (Section~\ref{subsec:results-tbpp}) answers \cite[Problem~8.5]{Boulabiar-survey}. For a truncation $\tau$ on a Riesz space $E$, the band generated by its fixed-point set $[\tau]$ is called a \emph{truncation band}. Boulabiar asks what can be said of Riesz spaces in which every truncation band is a projection band. We show (Theorem~\ref{thm:tbpp-equals-ppp}) that this \emph{truncation band projection property} coincides exactly with the classical \emph{principal} projection property, with no completeness or Archimedean hypothesis needed. The band projection onto a truncation band $B_{[\tau]}$ is then given by the explicit formula $P_{B_{[\tau]}}(z) = \sup_n(z \wedge n\tau(z))$.

Our second contribution (Section~\ref{subsec:results-am-al}) addresses \cite[Problem~8.4]{Boulabiar-survey}. What can be said about truncated $AM$- and $AL$-spaces? Does either property transfer between $E$ and $E \oplus \R$? When $E$ is unital, the unitization splits as a direct sum of projection bands, and the question reduces to an elementary, essentially folklore fact about lattice norms on such direct sums (Lemma~\ref{lem:direct-sum-norms}). The real substance of the problem lies in the nonunital case, where no such decomposition exists.

We show that a truncation is unital exactly when its fixed-point set has a supremum (Theorem~\ref{thm:unitality-criterion}). It follows that every bounded truncation on a Banach lattice with the Levi property is unital. This covers, in particular, every $KB$-space and every dual Banach lattice. So a nonunital truncated Banach lattice with bounded truncation always contains a closed sublattice copy of $c_0$, and it can be neither reflexive, nor weakly sequentially complete, nor an $AL$-space (Theorem~\ref{thm:c0-embeds}). In particular, the canonical unitization of a truncated normed Riesz space is never an $AL$-space under a truncation unitization norm (Corollary~\ref{cor:never-AL}). Passing to the norm completion restores the $AL$ property, though the truncation unit then lies outside the original space (Theorem~\ref{thm:completion-unital}). On the $AM$ side, which is compatible with nonunitality, the smallest truncation unitization norm is always an $M$-norm (Theorem~\ref{thm:smallest-is-M-norm}). An explicit example on $C_0(\R)$ shows that the largest one need not be (Example~\ref{ex:S-vs-L}).

Our third contribution (Section~\ref{subsec:results-hom}) settles \cite[Problem~8.1]{Boulabiar-survey}. Boulabiar and Hajji proved that a truncation homomorphism $T : E \to F$ between truncated Riesz spaces is automatically a Riesz homomorphism whenever the truncation on the \emph{codomain} $F$ is Archimedean \cite{Boulabiar-Hajji-MJM}. They asked whether the hypothesis could instead be placed on the domain. We answer negatively, with an explicit, choice-free, surjective counterexample built from $c_0$ and the quotient $c_0/c_{00}$ (Theorem~\ref{thm:hom-counterexample}). We also show that the Boulabiar--Hajji theorem is optimal in a strong sense. A single explicit test domain detects the failure of the implication for every non-Archimedean codomain truncation, even if the conclusion is weakened to mere positivity or the domain hypothesis is strengthened (Theorem~\ref{thm:optimality}).

We then locate the precise dividing line inside the codomain. Every truncation $\tau$ on a Riesz space $F$ induces a truncation on the quotient of $F$ by the band $F_0(\tau)$ of truncation infinitesimal elements. Using the structure theorem of \cite{Boulabiar-structure} inside the universal completion of $F$, we prove that this induced truncation is always Archimedean once $F$ itself is Archimedean (Theorem~\ref{thm:main-hom}). Consequently, every truncation homomorphism into an Archimedean Riesz space is a Riesz homomorphism \emph{modulo truncation infinitesimals}, whatever the truncation on that codomain (Corollary~\ref{cor:modulo-infinitesimals}). We check a further case beyond the Archimedean one, and we record the open questions that remain (Problem~\ref{prob:conjecture-strong}).

The paper is organized as follows. Section~\ref{sec:prelim} collects the notation and background needed for all three contributions. Section~\ref{sec:results} states our results, organized into the three contributions described above. Section~\ref{sec:proofs} contains all proofs. Our notation and terminology for Riesz spaces and Banach lattices follow \cite{Aliprantis-Burkinshaw,Luxemburg-Zaanen,MeyerNieberg,Schaefer}. Unexplained notions and elementary properties of truncations can be found in \cite{Boulabiar-survey} and the references therein.

% ==============================================================
% 2. PRELIMINARIES AND NOTATION
% ==============================================================
\section{Preliminaries and notation}
\label{sec:prelim}

Throughout the paper, all Riesz spaces are real. We follow standard terminology and notation for Riesz spaces (vector lattices) and Banach lattices, as in \cite{Aliprantis-Burkinshaw,Luxemburg-Zaanen,MeyerNieberg,Schaefer}. In particular, for a nonempty subset $A$ of a Riesz space $E$ we write $A\dd$ for its disjoint complement, $B_A$ for the band generated by $A$, $A^{+} := A \cap E^{+}$, and, for $w \in E$, $B_w$ for the principal band generated by $w$. Below we collect only the notation, terminology, and facts about truncations that are used in more than one of the paper's three contributions. Any background needed for a single contribution is instead recalled at the point where we use it, in Section~\ref{sec:results}.

\subsection{Truncations and truncation bands}
\label{subsec:truncations}

Let $E$ be a Riesz space with positive cone $E^{+}$. A \emph{truncation} on $E$ is a unary operation $\tau : E^{+} \longrightarrow E^{+}$ satisfying
\[
\tau(f) \wedge g = f \wedge \tau(g) \qquad (f, g \in E^{+}). \tag{$\tau_1$}
\]
Two further axioms are considered when needed:
\[
\tau(f) = 0 \implies f = 0, \tag{$\tau_2$}
\]
\[
\tau(nf) = nf \text{ for all } n \in \N \implies f = 0. \tag{$\tau_3$}
\]
A truncation satisfying $(\tau_2)$ is called \emph{weak}, and one satisfying $(\tau_3)$ is called \emph{Archimedean}. A truncation $\tau$ is called \emph{strong} if for every $0 \ne f \in E^{+}$ there is $\lambda > 0$ with $\tau(\lambda f) = \lambda f$. A Riesz space equipped with a truncation is a \emph{truncated Riesz space}. The truncation is \emph{unital} if there is $u \in E^{+}$ with $\tau(f) = u \wedge f$ for all $f \in E^{+}$. Such $u$ is then unique, and we call it the \emph{truncation unit}.

We collect the elementary properties of truncations that we will use repeatedly. See \cite{Ball1,Ball2,Boulabiar-survey} and \cite[Lemma~2.2]{Habibi-Hafsi}.

\begin{lemma}
\label{lem:truncation-basics}
Let $\tau$ be a truncation on a Riesz space $E$ and let $f, g \in E^{+}$.
\begin{enumerate}[label=(\roman*)]
\item $\tau(f) = f \wedge \tau(\tau(f))$, so that $\tau(f) \le f$ and $\tau(\tau(f)) = \tau(f)$. Moreover $\tau(f \wedge g) = \tau(f) \wedge \tau(g)$.
\item $f \le g$ implies $\tau(f) \le \tau(g)$.
\item If $g \in [\tau]$, then $g \wedge f \le \tau(f)$.
\item $\tau(f) = 0$ if and only if $f$ is disjoint from $[\tau]$,
\end{enumerate}
where $[\tau] := \{f \in E^{+} : \tau(f) = f\}$ denotes the set of \emph{fixed points} of $\tau$.
\end{lemma}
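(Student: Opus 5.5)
We derive all four items directly from the exchange identity $(\tau_1)$, using only lattice identities valid in any Riesz space. Item (i) comes first, since (ii)–(iv) lean on it.

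For (i), apply $(\tau_1)$ with $g=\tau(f)$, i.e.\ to the pair $(f,\tau(f))$. This gives $\tau(f)\wedge\tau(f)=f\wedge\tau(\tau(f))$, so $\tau(f)=f\wedge\tau(\tau(f))$ and in particular $\tau(f)\le f$. Applying $\tau(h)\le h$ to $h=\tau(f)$ gives $\tau(\tau(f))\le\tau(f)\le f$. Hence $f\wedge\tau(\tau(f))=\tau(\tau(f))$, and the displayed identity yields $\tau(\tau(f))=\tau(f)$.

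The meet formula is the one step needing some care. Put $h=f\wedge g$. First we show $\tau(f)\wedge\tau(g)\le\tau(h)$. Applying $(\tau_1)$ to the pair $(h,\tau(f)\wedge\tau(g))$ and using idempotence, $\tau(\tau(f)\wedge\tau(g))$ should equal $\tau(f)\wedge\tau(g)$. To see this, use $(\tau_1)$ with the pair $(\tau(f),\tau(g))$: it gives $\tau(\tau(f))\wedge\tau(g)=\tau(f)\wedge\tau(\tau(g))$, which is the trivial identity $\tau(f)\wedge\tau(g)=\tau(f)\wedge\tau(g)$, so it does not help. Instead, estimate directly:
\[
\tau(f)\wedge\tau(g)\le \tau(f)\wedge g=f\wedge\tau(g),
\]
and also $\tau(f)\wedge\tau(g)\le f\wedge g=h$.

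To compare with $\tau(h)$, use $\tau(h)\wedge k=h\wedge\tau(k)$ with $k=\tau(f)\wedge\tau(g)$, together with the fixed-point claim $\tau(k)=k$, which must be established. By (i), $\tau(k)\le k$. For the reverse inequality, apply $(\tau_1)$ to the pair $(k,\tau(f))$: $\tau(k)\wedge\tau(f)=k\wedge\tau(\tau(f))=k\wedge\tau(f)=k$. Hence $\tau(k)\ge k$, so $\tau(k)=k$. Then $\tau(h)\wedge k=h\wedge k=k$, giving $k\le\tau(h)$.

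For the converse inequality $\tau(h)\le\tau(f)\wedge\tau(g)$, apply $(\tau_1)$ to the pair $(h,f)$: $\tau(h)\wedge f=h\wedge\tau(f)$. Since $\tau(h)\le h\le f$, the left side is $\tau(h)$, so $\tau(h)\le\tau(f)$. By symmetry $\tau(h)\le\tau(g)$, which finishes (i).

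Item (ii) follows from (i): if $f\le g$, then $\tau(f)=\tau(f\wedge g)=\tau(f)\wedge\tau(g)\le\tau(g)$.

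For (iii), let $g\in[\tau]$. Then $g\wedge f=\tau(g)\wedge f=g\wedge\tau(f)\le\tau(f)$.

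For (iv), suppose first that $\tau(f)=0$. For every $g\in[\tau]$, item (iii) gives $g\wedge f\le\tau(f)=0$, so $f\perp[\tau]$. Conversely, suppose $f\perp[\tau]$. The element $\tau(f)$ lies in $[\tau]$ by idempotence, so $f\wedge\tau(f)=0$. Since $\tau(f)\le f$, we get $\tau(f)=f\wedge\tau(f)=0$.

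The main obstacle is the inequality $\tau(f)\wedge\tau(g)\le\tau(f\wedge g)$. It is handled by first proving that $[\tau]$ is closed under finite meets, via $(\tau_1)$ applied to the pair $(k,\tau(f))$.
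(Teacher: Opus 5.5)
Your proof is correct and complete. The paper does not prove this lemma at all: it lists the four properties as elementary facts and refers to Ball's papers, Boulabiar's survey, and Lemma~2.2 of Habibi--Hafsi. So there is no in-paper argument to compare with, and your derivation from $(\tau_1)$ alone is a valid self-contained verification.

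The only part worth streamlining is the meet formula, where the step you call the main obstacle is not needed. Your last computation, with the pair $(h,f)$ and $h=f\wedge g$, gives the exact identity $\tau(h)=h\wedge\tau(f)$, not just the inequality $\tau(h)\le\tau(f)$. Symmetrically, $\tau(h)=h\wedge\tau(g)$. Taking the meet of the two identities gives
$\tau(h)=h\wedge\tau(f)\wedge\tau(g)=\tau(f)\wedge\tau(g)$,
because $\tau(f)\wedge\tau(g)\le f\wedge g=h$ by (i). This makes the fixed-point argument $\tau(k)=k$ unnecessary. It also makes the exploratory sentences about the pair $(\tau(f),\tau(g))$ that ``does not help'' unnecessary, and those should not appear in a final write-up.

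The same computation shows more generally that $\tau(f)=f\wedge\tau(g)$ whenever $f\le g$ in $E^{+}$. This gives (ii) in one line, independently of the meet formula. Your arguments for (iii) and (iv) are exactly the natural ones.
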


The following consequences of Lemma~\ref{lem:truncation-basics}, the last of which is a Birkhoff type inequality, are used in Sections~\ref{subsec:results-am-al} and~\ref{subsec:results-hom}. We include the short proof for completeness.

\begin{lemma}
\label{lem:birkhoff}
Let $\tau$ be a truncation on a Riesz space $E$ and let $f, g \in E^{+}$.
\begin{enumerate}[label=(\roman*)]
\item If $f \le g$ and $g \in [\tau]$, then $f \in [\tau]$.
\item $\tau(f+g) \le \tau(f) + \tau(g)$.
\item $|\tau(f) - \tau(g)| \le \tau(|f-g|) \le |f-g|$.
\end{enumerate}
\end{lemma}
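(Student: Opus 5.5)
Part (i) needs only the monotonicity of $\tau$ and the fact that $\tau(f)\le f$, both from Lemma~\ref{lem:truncation-basics}. Suppose $f \le g$ and $g \in [\tau]$. Lemma~\ref{lem:truncation-basics}(iii) gives $f = g \wedge f \le \tau(f)$. Combined with $\tau(f)\le f$, this yields $\tau(f)=f$, so $f\in[\tau]$.

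For (ii) we use the Riesz decomposition property. Since $\tau(f+g)\le f+g$, we can write $\tau(f+g) = a + b$ with $0\le a\le f$ and $0\le b\le g$. Because $a\le \tau(f+g)$ and $\tau(f+g)\in[\tau]$, part (i) gives $a, b \in [\tau]$. Monotonicity of $\tau$ then gives $a = \tau(a)\le \tau(f)$ and $b=\tau(b)\le\tau(g)$. Adding these, $\tau(f+g)\le\tau(f)+\tau(g)$. The only point that needs care is noticing that both pieces of the decomposition are fixed points, and (i) supplies exactly that.

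For (iii), the second inequality is immediate from $\tau(h)\le h$ applied to $h = |f-g|$. For the first, we have $f \le g + |f-g|$. Monotonicity and (ii) then give
\[
\tau(f) \le \tau(g+|f-g|) \le \tau(g) + \tau(|f-g|),
\]
so $\tau(f)-\tau(g)\le \tau(|f-g|)$. Exchanging the roles of $f$ and $g$ gives the symmetric inequality, and taking the supremum of the two yields $|\tau(f)-\tau(g)|\le\tau(|f-g|)$.

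No step is a real obstacle. The only nonformal ingredient is the Riesz decomposition step in (ii), and it holds in every Riesz space, so no Archimedean assumption is needed.
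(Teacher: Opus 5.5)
Your proof is correct and follows the paper's argument essentially line by line: (i) via Lemma~\ref{lem:truncation-basics}(iii) together with $\tau(f)\le f$, (ii) via the Riesz decomposition of the fixed point $\tau(f+g)$, with (i) showing that both pieces are fixed points, and (iii) from $f\le g+|f-g|$, monotonicity, and (ii). The only cosmetic slip is your opening remark for (i), which says the argument uses monotonicity, whereas what you actually use there is Lemma~\ref{lem:truncation-basics}(iii).
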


\begin{proof}
(i) By Lemma~\ref{lem:truncation-basics}(iii) applied to $g \in [\tau]$, $f = g \wedge f \le \tau(f) \le f$.

(ii) Put $a = \tau(f+g)$. By Lemma~\ref{lem:truncation-basics}(i), $a \in [\tau]$ and $0 \le a \le f+g$, so the Riesz decomposition property gives $a = a_1 + a_2$ with $0 \le a_1 \le f$ and $0 \le a_2 \le g$. By (i), $a_1, a_2 \in [\tau]$, so $a_1 = \tau(a_1) \le \tau(f)$ and $a_2 = \tau(a_2) \le \tau(g)$ by Lemma~\ref{lem:truncation-basics}(ii).

(iii) From $f \le g + |f-g|$ and (ii), $\tau(f) \le \tau(g) + \tau(|f-g|)$, and symmetrically $\tau(g) \le \tau(f) + \tau(|f-g|)$. Hence $|\tau(f)-\tau(g)| \le \tau(|f-g|)$, and $\tau(|f-g|) \le |f-g|$ by Lemma~\ref{lem:truncation-basics}(i).
\end{proof}

\begin{definition}
\label{def:truncation-band}
Let $\tau$ be a truncation on a Riesz space $E$. The band $B_{[\tau]}$ generated by $[\tau]$ is called the \emph{truncation band} of $\tau$. The Riesz space $E$ is said to have the \emph{truncation band projection property} if the truncation band of every truncation on $E$ is a projection band.
\end{definition}

\subsection{Projection properties}
\label{subsec:projection}

Recall that $E$ has the \emph{projection property} if every band of $E$ is a projection band, and the \emph{principal projection property} if every principal band of $E$ is a projection band. Every Dedekind complete Riesz space has the projection property \cite[Theorem~1.42]{Aliprantis-Burkinshaw}, and the projection property in turn implies the principal projection property. The principal projection property implies the Archimedean property \cite[Theorem~25.1]{Luxemburg-Zaanen}. If $E$ is Archimedean and $B$ is a principal projection band with band projection $P$, generated by $w \in E^{+}$, then
\[
Pz = \sup_{n} (z \wedge nw) \qquad (z \in E^{+}), \tag{1}
\]
see \cite[Theorem~1.47]{Aliprantis-Burkinshaw}. % TODO: vérifier le numéro de théorème dans l'édition 2006.
Moreover, an ideal $B$ of $E$ is a projection band if and only if, for every $z \in E^{+}$, the supremum $\sup\big(B^{+} \cap [0, z]\big)$ exists in $E$ and belongs to $B$ \cite[Theorem~1.41]{Aliprantis-Burkinshaw}.

\subsection{The canonical unitization}
\label{subsec:unitization}

Following \cite{Boulabiar-ElAdeb,Boulabiar-Hafsi-Mahfoudhi,Habibi-Hafsi}, every truncated Riesz space $E$ (with a truncation satisfying $(\tau_2)$) embeds as a maximal ideal, order dense when $E$ is nonunital, of a unital truncated Riesz space through its \emph{canonical} (or \emph{Alexandroff}) \emph{unitization} $E \oplus \R$, whose positive cone is
\[
(E \oplus \R)^{+} = E^{+} \cup \Big\{ x + \lambda : 0 < \lambda \in \R,\ x \in E,\ \tfrac{1}{\lambda} x^{-} \in [\tau] \Big\},
\]
the truncation on $E\oplus\R$ being the meet with the adjoined unit $\mathbf{1} := 0 + 1$. We will use two facts about $E \oplus \R$, both valid when the truncation satisfies $(\tau_2)$.

If $E$ is unital with truncation unit $u$, then $E$ and $\R(\mathbf 1 - u)$ are complementary projection bands in $E \oplus \R$. Every $x + r \in E \oplus \R$ then decomposes as
\[
x + r = (x+ru) + r(\mathbf 1 - u), \qquad \text{with} \qquad |x+r| = |x+ru| + |r|(\mathbf 1 - u).
\]
If instead $E$ is nonunital, then its disjoint complement in $E \oplus \R$ is trivial, and $E$ is order dense in $E \oplus \R$ (\cite[Theorem~2.1]{Habibi-Hafsi}, \cite{Boulabiar-Hafsi-Mahfoudhi}).

Finally, suppose $E$ is nonunital. Let $H$ be a Riesz space containing $E$ as a Riesz subspace, and let $w$ be a weak order unit of $H$ with $\tau(f) = w \wedge f$ for all $f \in E^{+}$, the infimum taken in $H$. Then $E + \R w$ is a Riesz subspace of $H$, and $x + r \mapsto x + rw$ is a Riesz isomorphism of $E \oplus \R$ onto $E + \R w$ that carries $\mathbf 1$ to $w$ (\cite[Theorem~3.1, Corollary~3.2]{Boulabiar-Hafsi-Mahfoudhi}). We will use this identification repeatedly below, without further comment, whenever a truncation is realized as the meet with a weak order unit in some ambient space.

\subsection{Truncated normed Riesz spaces and unitization norms}
\label{subsec:normed}

Let $(E,\tau)$ be a truncated Riesz space equipped with a lattice norm $\|\cdot\|_E$. The truncation $\tau$ is \emph{bounded} if $\sup\{\|\tau(f)\|_E : f \in E^{+}\}$ is finite. A \emph{truncated normed Riesz space} is a triple $(E, \|\cdot\|_E, \tau)$ consisting of a Riesz space, a lattice norm, and a bounded truncation satisfying $(\tau_2)$, normalized so that
\[
\sup\{\|\tau(f)\|_E : f \in E^{+}\} = 1.
\]
This normalization is harmless, since it only rescales the norm, and it reads $\|u\|_E = 1$ when $\tau$ is unital with truncation unit $u$. The axiom $(\tau_2)$ is the standing hypothesis of \cite{Boulabiar-Hafsi-Mahfoudhi}, from which the facts about $E \oplus \R$ recalled in Section~\ref{subsec:unitization} are taken, and it is not implied by boundedness: on $c_0 \oplus_\infty c_0$, the truncation $\tau(f,g) = (\mathbf 1 \wedge f, 0)$ is bounded and normalized but fails $(\tau_2)$. A \emph{truncated Banach lattice} is a Banach lattice equipped with a truncation, no boundedness or normalization being assumed unless stated. A \emph{truncation unitization norm} on $E \oplus \R$ is a lattice norm $\|\cdot\|_U$ such that $\|\mathbf 1\|_U = 1$ and $\|x\|_U = \|x\|_E$ for every $x \in E$.

\begin{theorem}[{\cite{Boulabiar-Hafsi}, see also \cite[Theorem~6.3, 6.4]{Boulabiar-survey}}]
\label{thm:extreme-norms}
Let $(E, \|\cdot\|_E, \tau)$ be a truncated normed Riesz space. Then
\[
\|x+r\|_L = \big\| \big( |x+r| - |r|\mathbf 1 \big)^{+} \big\|_E + |r|
\]
defines the largest truncation unitization norm on $E \oplus \R$. If moreover $E$ is nonunital, then
\[
\|x+r\|_S = \sup\{ \|g\|_E : g \in E \text{ and } |g| \le |x+r| \}
\]
defines the smallest truncation unitization norm on $E \oplus \R$. Furthermore, $E$ is a Banach lattice if and only if $E \oplus \R$ is a Banach lattice under one, equivalently under every, truncation unitization norm.
\end{theorem}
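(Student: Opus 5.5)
The plan is to verify the four assertions separately. The basic tool throughout is the map $\pi : E\oplus\R \to \R$, $\pi(x+r)=r$. Since $E$ is a maximal ideal of $E\oplus\R$ with one-dimensional quotient, $\pi$ is a Riesz homomorphism. Hence $\pi(|x+r|)=|r|$, so $|x+r|-|r|\mathbf 1\in E$, which shows that $\|\cdot\|_L$ is well defined. The second tool is the observation that for $b\in E^{+}$ and $c>0$ with $b\le c\mathbf 1$ we have $b=c\mathbf 1\wedge b=c\,\tau(b/c)$. The normalization of $\tau$ then gives $\|b\|_E\le c$.

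\textbf{$\|\cdot\|_L$ is a truncation unitization norm.} Homogeneity is clear, and so are $\|x\|_L=\|x\|_E$ for $x\in E$ and $\|\mathbf 1\|_L=1$. Definiteness also follows: if $\|x+r\|_L=0$ then $r=0$ and $x=0$. The triangle inequality and the lattice property both reduce to the estimate
\[
\|(y+c\mathbf 1)^{+}\|_E\le \|y^{+}\|_E+c \qquad (y\in E,\ c\ge 0).
\]
To prove it, note that $0\le (y+c\mathbf 1)^{+}\le y^{+}+c\mathbf 1$. The Riesz decomposition property gives $(y+c\mathbf 1)^{+}=a+b$ with $0\le a\le y^{+}$ and $0\le b\le c\mathbf 1$, and both $a,b$ lie in the ideal $E$. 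The second tool then gives $\|b\|_E\le c$.

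For the triangle inequality, put $z_i=x_i+r_i$. Then
\[
|z_1+z_2|-|r_1+r_2|\mathbf 1\le \big(|z_1|-|r_1|\mathbf 1\big)+\big(|z_2|-|r_2|\mathbf 1\big)+\big(|r_1|+|r_2|-|r_1+r_2|\big)\mathbf 1,
\]
and the estimate applies with $c=|r_1|+|r_2|-|r_1+r_2|$.

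Monotonicity is handled the same way. Suppose $|z|\le|w|$ with $\pi(z)=r$ and $\pi(w)=s$. Then $|r|\le|s|$ and $|z|-|r|\mathbf 1\le(|w|-|s|\mathbf 1)+(|s|-|r|)\mathbf 1$, and again the estimate applies.

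\textbf{Maximality of $\|\cdot\|_L$.} Let $\|\cdot\|_U$ be any truncation unitization norm and write
\[
|x+r|=\big(|x+r|-|r|\mathbf 1\big)^{+}+|x+r|\wedge|r|\mathbf 1 .
\]
The first summand lies in $E$, where $\|\cdot\|_U$ coincides with $\|\cdot\|_E$. The second summand is bounded by $|r|\mathbf 1$, so its $U$-norm is at most $|r|$. Hence $\|x+r\|_U\le\|x+r\|_L$.

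\textbf{$\|\cdot\|_S$ in the nonunital case.} Homogeneity and the lattice property are immediate, and $\|x\|_S=\|x\|_E$ for $x\in E$. For the triangle inequality, take $g\in E$ with $|g|\le|z_1|+|z_2|$. The Riesz decomposition property splits $|g|=g_1+g_2$ with $0\le g_i\le|z_i|$, and $g_i\in E$ because $E$ is an ideal. Next, $\|\mathbf 1\|_S=1$. Indeed, any $g\in E$ with $|g|\le\mathbf 1$ satisfies $|g|=\tau(|g|)$, so the normalization $\sup\|\tau(f)\|_E=1$ gives exactly $1$.

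Definiteness is where nonunitality enters. If $z\neq0$, order density of $E$ in $E\oplus\R$ yields some $0\ne g\in E$ with $g\le|z|$, so $\|z\|_S>0$. Minimality is then trivial: for every truncation unitization norm $\|\cdot\|_U$ and every admissible $g$, we have $\|g\|_E=\|g\|_U\le\|x+r\|_U$.

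\textbf{The Banach lattice equivalence.} This is the step I expect to be the main obstacle. The direction "$E$ complete $\Rightarrow$ $E\oplus\R$ complete" is the standard fact that a complete subspace of codimension one in a normed space gives a complete space. One first notes that $E$ is closed in $E\oplus\R$, being complete, and then adds a one-dimensional complement.

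The converse requires $E$ to be closed in $E\oplus\R$, equivalently $\pi$ to be continuous. My first attempt would argue through the closure $\overline E$. In a normed Riesz space the closure of an ideal is an ideal, and since $E$ is maximal, either $\overline E=E$ or $\overline E=E\oplus\R$. To rule out the second case, suppose $x_n\to\mathbf 1$ with $x_n\in E$. Using Lemma~\ref{lem:birkhoff}(iii) for the extension $\mathbf 1\wedge\cdot$, I would replace $x_n$ by the fixed points $\tau(|x_n|)\to\mathbf 1$. I would then aim for a contradiction from the resulting description of $\mathbf 1$ as a norm limit of elements of $[\tau]$. If this fails in general, the fallback is to show directly that $|r|\le C\|x+r\|_U$ by comparing $\|\cdot\|_U$ with $\|\cdot\|_S$ on elements $|x+r|\wedge|r|\mathbf 1$.

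The step "equivalently under every norm" would follow once all truncation unitization norms are shown to be equivalent. Here the bounds $\|\cdot\|_S\le\|\cdot\|_U\le\|\cdot\|_L$ give one side, and the continuity of $\pi$ gives the reverse comparison of $\|\cdot\|_L$ with $\|\cdot\|_S$.
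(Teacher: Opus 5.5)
The paper quotes this theorem without proof, so your argument has to stand on its own. As you suspected, it has a genuine gap in the last assertion.

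\textbf{The gap.} The missing step is the implication ``$E\oplus\R$ complete under some truncation unitization norm $\|\cdot\|_U$ $\Rightarrow$ $E$ complete''. Both routes you sketch never use completeness of $E\oplus\R$: deriving a contradiction from $\mathbf 1$ being a norm limit of fixed points, and proving $|r|\le C\|x+r\|_U$ by comparison with $\|\cdot\|_S$. Without completeness the conclusion is false. In Example~\ref{ex:completion}, $E\oplus\R$ is identified with $C([0,1])$, and $\|\cdot\|_S$ is the $L^1$-norm there. The fixed points $x_n(t)=\min(nt,1)$ of $\tau$ satisfy $\|\mathbf 1-x_n\|_S=1/(2n)\to0$. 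So $\mathbf 1$ is a norm limit of elements of $[\tau]$, $E$ is dense in $E\oplus\R$, and no constant $C$ exists. In that incomplete example the truncation unitization norms are not even equivalent, since $\|\mathbf 1-x_n\|_L=1$.

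\textbf{The missing idea.} You need automatic continuity of positive functionals on Banach lattices. Your map $\pi$ is positive. Suppose $(E\oplus\R,\|\cdot\|_U)$ is complete and $\pi$ is unbounded. Choose $v_n$ with $\|v_n\|_U\le1$ and $|\pi(v_n)|\ge n2^n$, and set $z_n=2^{-n}|v_n|$. Since $\pi(|v|)=|\pi(v)|$, you get $z_n\ge0$, $\|z_n\|_U\le2^{-n}$ and $\pi(z_n)\ge n$. Then $z=\sum_n z_n$ exists, and because the positive cone is closed, $\pi(z)\ge\sum_{k\le N}\pi(z_k)\ge N$ for every $N$, which is absurd. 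Hence $\pi$ is continuous, so $E=\ker\pi$ is closed and therefore complete.

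This also settles ``equivalently under every'' without comparing norms. Once $E$ is complete, your codimension-one argument applies to each truncation unitization norm separately. Your plan to get this from the equivalence of all such norms via $\|\cdot\|_S$ would in any case fail when $E$ is unital, because $\|\cdot\|_S$ is then not a norm.

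\textbf{A smaller slip in the $\|\cdot\|_L$ part.} The estimate $\|(y+c\mathbf 1)^{+}\|_E\le\|y^{+}\|_E+c$ is meaningless for $c>0$. Indeed $\pi\big((y+c\mathbf 1)^{+}\big)=c$, so $(y+c\mathbf 1)^{+}\notin E$, and in your decomposition $(y+c\mathbf 1)^{+}=a+b$ the summand $b$ does not lie in $E$.

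Apply the Riesz decomposition instead to $w^{+}$, where $w\in E$ is the element you actually estimate and $w\le y+c\mathbf 1$. Here $w=|z_1+z_2|-|r_1+r_2|\mathbf 1$ for the triangle inequality, and $w=|z|-|r|\mathbf 1$ for monotonicity. From $0\le w^{+}\le y^{+}+c\mathbf 1$ you get $w^{+}=a+b$ with $a\le y^{+}$ and $b\le c\mathbf 1$. Then $a\in E$, so $b=w^{+}-a\in E$, and your second tool gives $\|w^{+}\|_E\le\|y^{+}\|_E+c$. For the triangle inequality, add $y^{+}\le y_1^{+}+y_2^{+}$.

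With this correction, the rest is correct: $\|\cdot\|_L$ and its maximality, $\|\cdot\|_S$ and its minimality, and the direction ``$E$ complete $\Rightarrow$ $E\oplus\R$ complete''.
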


Recall that a lattice norm is \emph{additive on disjoint positive elements} if $\|x+y\| = \|x\| + \|y\|$ whenever $x \wedge y = 0$ (such $x,y$ being automatically positive), and is an \emph{M-norm} if $\|x \vee y\| = \|x\| \vee \|y\|$ whenever $x \wedge y = 0$. An \emph{AL-space} (resp.\ an \emph{AM-space}) is a Banach lattice whose norm is additive on disjoint positive elements (resp.\ is an M-norm). For a Banach lattice, these conditions on disjoint elements are equivalent to the classical ones, namely $\|x+y\| = \|x\|+\|y\|$ and $\|x \vee y\| = \|x\| \vee \|y\|$ for all $x, y \in E^{+}$, by Kakutani's representation theorems, see \cite[Theorems~1.b.2 and~1.b.6]{Lindenstrauss-Tzafriri}, so that the results on $AL$- and $AM$-spaces quoted from \cite{MeyerNieberg} apply. % TODO: vérifier les numéros 1.b.2 (abstract L_p spaces) et 1.b.6 (abstract M spaces) dans Lindenstrauss--Tzafriri II.
A \emph{KB-space} is a Banach lattice in which every norm bounded increasing sequence of positive elements is norm convergent. A Banach lattice has the \emph{Levi property} if every upward directed, norm bounded subset of its positive cone has a supremum \cite{AliprantisBurkinshawLS}. % TODO: indiquer le numéro de la définition dans [2].

We record the classical facts used below.

\begin{theorem}
\label{thm:classical}
Let $E$ be a Banach lattice.
\begin{enumerate}[label=(\roman*)]
\item If $E$ is a $KB$-space, every norm bounded, upward directed subset of $E^{+}$ has a supremum in $E$, to which it converges in norm. In particular, every $KB$-space has the Levi property \cite{AliprantisBurkinshawLS}. % TODO: indiquer le numéro de théorème dans Aliprantis--Burkinshaw (Locally Solid), ou citer Meyer-Nieberg Theorem 2.4.12 (KB = bande dans le bidual) avec l'ordre-continuité de la norme.
\item $E$ is a $KB$-space if and only if $E$ contains no closed Riesz subspace lattice isomorphic to $c_0$, if and only if $E$ is weakly sequentially complete \cite[Theorem~2.5.6]{MeyerNieberg}. % TODO: vérifier les numéros cités de Meyer-Nieberg (Thm 2.5.6, Thm 2.4.15, Cor. 2.4.13, Prop. 2.4.19(ii)).
\item If $E$ is reflexive, then $E$ is a $KB$-space \cite[Theorem~2.4.15]{MeyerNieberg}. If $E$ is an $AL$-space, then $E$ is also a $KB$-space \cite[Corollary~2.4.13]{MeyerNieberg}.
\item If $E$ is a dual Banach lattice, then $E$ has the Levi property \cite[Proposition~2.4.19(ii)]{MeyerNieberg}.
\end{enumerate}
\end{theorem}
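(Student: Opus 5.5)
The plan is to treat Theorem~\ref{thm:classical} as a compilation of standard facts. Each item is reduced to its cited source, and a self-contained argument is supplied only where it is short. These arguments are sketches, not checked proofs.

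For (i), let $D \subseteq E^{+}$ be norm bounded and upward directed. I would first show that $D$, viewed as a net indexed by itself, is norm Cauchy. Suppose it is not. Then there is $\varepsilon>0$ such that for every $d \in D$ some $d' \ge d$ in $D$ satisfies $\|d'-d\|\ge \varepsilon$. Starting from any $d_1$ and using directedness to choose $d_{n+1}\ge d_n$ with $\|d_{n+1}-d_n\|\ge\varepsilon$, we obtain a norm bounded increasing sequence in $E^{+}$ that is not norm convergent. This contradicts the $KB$ property. Let $s$ be the norm limit of the net $D$. The positive cone is norm closed, and for fixed $d$ the elements $d' - d$ with $d' \ge d$ lie in it, so $s \ge d$ for all $d\in D$. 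If $t\ge d$ for all $d$, then $t - d' \in E^{+}$ for every $d'$, and passing to the limit gives $t\ge s$. Hence $s=\sup D$, and the Levi property follows. This is the only place where a small argument is needed: the point is to extract a sequence from a net, and directedness is exactly what makes that possible.

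Items (ii) and (iii) would be quoted from \cite{MeyerNieberg}, with brief justifications where they are easy.
\begin{itemize}
\item For the $AL$ case of (iii), take an increasing norm bounded sequence $(x_n)$ in $E^{+}$. For $m\ge n$, additivity of the norm on positive elements gives $\|x_m-x_n\| = \|x_m\|-\|x_n\|$. The sequence $(\|x_n\|)$ is increasing and bounded, so $(x_n)$ is norm Cauchy and converges.
\item For the reflexive case of (iii), apply (ii): a reflexive space is weakly sequentially complete, and in any case it cannot contain $c_0$, since closed subspaces of reflexive spaces are reflexive.
\item The equivalences in (ii) are the Lozanovskii--Meyer-Nieberg theorem, and I would cite them without reproof.
\end{itemize}

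For (iv), let $D\subseteq (E^{*})^{+}$ be norm bounded and upward directed, say by $M$, where $E^{*}$ is the norm dual. I would define $\phi(x)=\sup_{d\in D} d(x)$ for $x\in E^{+}$. This is finite because $d(x) \le \|d\|\,\|x\| \le M\|x\|$. The map $\phi$ is additive on $E^{+}$: directedness lets us take the two suprema along a common upper element of $D$. It is also positively homogeneous. It therefore extends to a positive linear functional on $E$, which is bounded because positive functionals on Banach lattices are automatically bounded. By construction this extension is the supremum of $D$ in $E^{*}$.
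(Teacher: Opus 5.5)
Your proposal is correct. It differs from the paper mainly in that the paper gives no proof of this theorem at all. The paper states it as a list of citations. Its only argument anywhere is a one-line sketch for (iv), inside the proof of Theorem~\ref{thm:unitality-criterion}: by weak-$*$ compactness, a norm bounded, upward directed family of positive functionals has a weak-$*$ limit point, which is then checked to be its supremum.

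You instead cite only (ii) and give self-contained arguments for (i), both halves of (iii), and (iv).
\begin{enumerate}[label=(\roman*)]
\item For (i), your Cauchy-net argument actually proves the norm convergence to the supremum that the statement asserts. The paper only cites this, and leaves a note that the reference is unverified.
\item For (iv), you construct the supremum directly as $\phi(x)=\sup_{d\in D}d(x)$ on the positive cone, use directedness for additivity, and extend. This avoids Banach--Alaoglu. It also gives more than a limit point: $D$ actually converges weak-$*$ to $\phi$, because for each $x\ge 0$ the real net $d(x)$ is increasing and bounded by $M\|x\|$. The boundedness of the extension is immediate from $|\phi(x)|\le\phi(|x|)\le M\|x\|$.
\end{enumerate}

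Three small points need fixing, none of them a real gap.
\begin{enumerate}[label=(\alph*)]
\item The negation of the Cauchy condition gives $d',d''\ge d$ with $\|d'-d''\|\ge\varepsilon$. So you obtain some $d'\ge d$ with $\|d'-d\|\ge\varepsilon/2$, not $\varepsilon$. This is harmless.
\item In the $AL$ case of (iii) you use additivity of the norm on all positive elements. The paper defines $AL$-spaces by additivity on \emph{disjoint} positive elements, so you should invoke the Kakutani equivalence the paper records in Section~\ref{subsec:normed}.
\item In (iv), the dual Banach lattice is the space $E$ of the statement. Write it as $F^{*}$ for a Banach lattice $F$, rather than $E^{*}$, to avoid the notational clash.
\end{enumerate}
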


\subsection{Truncation homomorphisms and truncation infinitesimals}
\label{subsec:homomorphisms}

An \emph{extended truncation} on a Riesz space $E$ is a map $\sigma : E \to E$ such that $\sigma(0)=0$ and $\sigma(f)\wedge g = f \wedge \sigma(g)$ for all $f,g \in E$. By \cite[Theorem~2.4]{Boulabiar-survey}, restriction $\sigma \mapsto \sigma|_{E^+}$ is a bijection from extended truncations onto truncations on $E$, with inverse taking $\tau$ to
\[
\widetilde\tau(f) = \tau(f^{+}) - f^{-} \qquad (f \in E).
\]
Given truncated Riesz spaces $(E,\tau_E)$ and $(F,\tau_F)$, a linear operator $T : E \to F$ is a \emph{truncation homomorphism} if $T \circ \widetilde{\tau_E} = \widetilde{\tau_F} \circ T$.

For a truncation $\tau$ on a Riesz space $F$, the set
\[
F_0(\tau) = \{ g \in F : \tau(n|g|) = n|g| \text{ for all } n \in \N \}
\]
is called the band of \emph{truncation infinitesimal} elements of $F$. It is indeed a band, and $\tau$ is Archimedean if and only if $F_0(\tau) = \{0\}$ \cite[Proposition~4.1]{Boulabiar-survey}. The identity map of $F$ satisfies $(\tau_1)$ trivially, and it lies at the opposite end of the Archimedean spectrum.

\begin{lemma}
\label{lem:identity-truncation}
On any Riesz space $F$, the identity truncation $\tau_F(g) = g$ satisfies $\widetilde{\tau_F} = \mathrm{id}_F$ and $F_0(\tau_F) = F$. In particular, $\tau_F$ is weak, and it is Archimedean only if $F = \{0\}$.
\end{lemma}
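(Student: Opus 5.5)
The plan is to check each assertion directly from the definitions recalled in Section~\ref{subsec:homomorphisms}. The first step verifies that $\tau_F(g)=g$ is a truncation. The identity $(\tau_1)$ reads $f\wedge g = f\wedge g$, which is trivial, and $\tau_F$ maps $F^{+}$ into $F^{+}$.

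The second step computes the extended truncation. By the formula $\widetilde\tau(f)=\tau(f^{+})-f^{-}$ from \cite[Theorem~2.4]{Boulabiar-survey}, we get $\widetilde{\tau_F}(f)=f^{+}-f^{-}=f$ for every $f\in F$, so $\widetilde{\tau_F}=\mathrm{id}_F$. As a cross-check, the identity map of $F$ is itself an extended truncation, since $\mathrm{id}(0)=0$ and $f\wedge g=f\wedge g$. By the bijectivity in that theorem, it is the unique extended truncation restricting to $\tau_F$.

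The third step handles the infinitesimals. For any $g\in F$ and $n\in\N$, the element $n|g|$ lies in $F^{+}$ and $\tau_F(n|g|)=n|g|$, so $g\in F_0(\tau_F)$. Hence $F_0(\tau_F)=F$.

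The last step treats the two remaining properties. Weakness $(\tau_2)$ holds because $\tau_F(f)=0$ means $f=0$. For the Archimedean claim, \cite[Proposition~4.1]{Boulabiar-survey} says $\tau_F$ is Archimedean if and only if $F_0(\tau_F)=\{0\}$, that is, if and only if $F=\{0\}$. One can also see this directly: any $0\neq f\in F^{+}$ satisfies $\tau_F(nf)=nf$ for all $n$, which violates $(\tau_3)$. There is no real obstacle here; the only care needed is to use the correct convention for $\widetilde\tau$.
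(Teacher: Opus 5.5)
Your proof is correct and takes essentially the same approach as the paper: you compute $\widetilde{\tau_F}(g)=\tau_F(g^{+})-g^{-}=g$, note that $\tau_F(n|g|)=n|g|$ gives $F_0(\tau_F)=F$, and read off $(\tau_2)$ directly. Your explicit treatment of the Archimedean clause, via $F_0(\tau_F)=\{0\}$ or directly through $(\tau_3)$ applied to a nonzero positive element, spells out a step that the paper leaves implicit.
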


\begin{proof}
For $g \in F$, $\widetilde{\tau_F}(g) = \tau_F(g^{+}) - g^{-} = g$, and $\tau_F(n|g|) = n|g|$ for all $n \in \N$, so $g \in F_0(\tau_F)$. Finally, $\tau_F(g) = 0$ forces $g = 0$.
\end{proof}

\begin{theorem}[Boulabiar--Hajji, {\cite[Theorem~2.2]{Boulabiar-Hajji-MJM}}]
\label{thm:BH}
Let $E, F$ be Riesz spaces with truncations. If the truncation on $F$ is Archimedean, then every truncation homomorphism $T : E \to F$ is a Riesz homomorphism (in particular, a positive operator).
\end{theorem}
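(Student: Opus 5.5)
The plan is to prove positivity of $T$ first, and then disjointness preservation in the form $T(f^{+}) \wedge T(f^{-}) = 0$. In both steps the aim is to produce an element of $F_0(\tau_F)$, which vanishes because $\tau_F$ is Archimedean (\cite[Proposition~4.1]{Boulabiar-survey}).

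\emph{Positivity.} Let $f \in E^{+}$. Since $(-f)^{+}=0$, we have $\widetilde{\tau_E}(-nf) = \tau_E(0) - nf = -nf$ for every $n \in \N$. The truncation homomorphism identity then gives $\widetilde{\tau_F}(-nTf) = -nTf$. Put $g = -Tf$. Expanding $\widetilde{\tau_F}(ng) = \tau_F(ng^{+}) - ng^{-}$ and comparing with $ng = ng^{+} - ng^{-}$ yields $\tau_F(ng^{+}) = ng^{+}$ for all $n$. Hence $g^{+} \in F_0(\tau_F) = \{0\}$, that is, $Tf \ge 0$. Once $T$ is positive, for $h \in E^{+}$ we have $Th \ge 0$, so the homomorphism identity reduces to $T\tau_E(h) = \tau_F(Th)$.

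\emph{Riesz homomorphism.} Fix $f \in E$ and put $a = T(f^{+})$, $b = T(f^{-})$ and $d = a-b = Tf$. Apply the identity $T\circ\widetilde{\tau_E} = \widetilde{\tau_F}\circ T$ to $nf$. On the left this gives $T\tau_E(nf^{+}) - nb = \tau_F(na) - nb$. On the right it gives $\tau_F(nd^{+}) - nd^{-}$. Rearranging, and using the lattice identity $b - (a-b)^{-} = b - (b-a)^{+} = a \wedge b$, we obtain
\[
n(a\wedge b) = \tau_F(na) - \tau_F(nd^{+}).
\]
Next, Lemma~\ref{lem:birkhoff}(iii) bounds the right-hand side by $\tau_F\big(|na - nd^{+}|\big)$. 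Since $a \ge (a-b)^{+}$, we have $a - d^{+} = a - (a-b)^{+} = a\wedge b$. Therefore
\[
n(a\wedge b) \le \tau_F(n(a\wedge b)) \le n(a\wedge b),
\]
where the last inequality is Lemma~\ref{lem:truncation-basics}(i). Thus $\tau_F(n(a\wedge b)) = n(a\wedge b)$ for all $n$, so $a \wedge b \in F_0(\tau_F) = \{0\}$. Consequently $T(f^{+}) \wedge T(f^{-}) = 0$, and together with positivity this gives $|Tf| = T|f|$, so $T$ is a Riesz homomorphism.

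The main obstacle is the second step: spotting the right test element and the lattice identities that turn the homomorphism equation into a self-bound $n(a\wedge b) \le \tau_F(n(a\wedge b))$. The Birkhoff-type inequality of Lemma~\ref{lem:birkhoff}(iii) is exactly what closes this gap. The rest is bookkeeping with $\widetilde{\tau}(f) = \tau(f^{+}) - f^{-}$.
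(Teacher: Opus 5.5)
The paper does not prove this theorem. It is quoted from \cite[Theorem~2.2]{Boulabiar-Hajji-MJM} and used as a black box in Lemma~\ref{lem:reduction}, Theorem~\ref{thm:optimality} and Proposition~\ref{prop:ideal-condition}, so there is no in-paper argument to compare with.

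Your proof is correct and self-contained. The only fact you use without comment is $\tau_E(0)=0$, which follows from $0\le\tau_E(0)\le 0$ by Lemma~\ref{lem:truncation-basics}(i). The identities $b-(a-b)^{-}=a\wedge b$ and $a-(a-b)^{+}=a\wedge b$ hold in every Riesz space. Positivity of $T$ is exactly what licenses $na\in F^{+}$ and $T\tau_E(nf^{+})=\tau_F(na)$ in the second step. Lemma~\ref{lem:birkhoff}(iii) then gives the self-bound, and $(\tau_3)$ forces $a\wedge b=0$. Positivity together with $T(f^{+})\wedge T(f^{-})=0$ yields $|Tf|=T|f|$.

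Your argument also gives something the paper does not state. The positivity step uses the Archimedean hypothesis only in its final line. Without it, the same computation shows $\tau_F(n(Tf)^{-})=n(Tf)^{-}$ for all $n$, that is, $(Tf)^{-}\in F_0(\tau_F)$ for every $f\in E^{+}$, for an \emph{arbitrary} truncated Riesz space $(F,\tau_F)$. This is the last clause of Corollary~\ref{cor:modulo-infinitesimals}, which the paper derives only for Archimedean $F$, via the structure theorem and Theorem~\ref{thm:main-hom}.

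So $q\circ T$ is always positive. Your second step, applied to the positive truncation homomorphism $q\circ T:(E,\tau_E)\to(F/F_0(\tau_F),\overline\tau)$, then gives $q(Tf^{+})\wedge q(Tf^{-})\in F_0(\overline\tau)$. Concluding that this element vanishes still requires $\overline\tau$ to be Archimedean. That is supplied by Theorem~\ref{thm:main-hom} for Archimedean $F$, and is exactly the content of Problem~\ref{prob:conjecture-strong} in general.
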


This is the theorem behind \cite[Problem~8.1]{Boulabiar-survey}, which asks whether the Archimedean hypothesis can instead be imposed on the \emph{domain}: is every truncation homomorphism from a Riesz space with an Archimedean truncation into a Riesz space with an arbitrary truncation necessarily a Riesz homomorphism?

% ==============================================================
% 3. MAIN RESULTS
% ==============================================================
\section{Main results}
\label{sec:results}

This section presents our results, organized into three parts that match the paper's three contributions, in the order announced in the introduction. Section~\ref{subsec:results-tbpp} answers \cite[Problem~8.5]{Boulabiar-survey}. Section~\ref{subsec:results-am-al} answers \cite[Problem~8.4]{Boulabiar-survey}. Section~\ref{subsec:results-hom} answers \cite[Problem~8.1]{Boulabiar-survey}. The three problems are logically independent, and the three parts can be read in any order.

\subsection{Truncation bands and the principal projection property}
\label{subsec:results-tbpp}

We first record how truncation bands sit between principal bands and projection bands.

\needspace{7\baselineskip}
\begin{proposition}
\label{prop:bands}
Let $E$ be a Riesz space.
\begin{enumerate}[label=(\roman*)]
\item Every principal band of $E$ is a truncation band: for $w \in E^{+}$ one has $B_w = B_{[\tau_w]}$, where $\tau_w(f) = w \wedge f$ for all $f \in E^{+}$.
\item Every projection band of $E$ is a truncation band: if $B$ is a projection band with band projection $P$, then the restriction of $P$ to $E^{+}$ is a truncation on $E$ with $[P] = B^{+}$, so that $B_{[P]} = B$.
\item The projection property implies the truncation band projection property, which in turn implies the principal projection property.
\end{enumerate}
\end{proposition}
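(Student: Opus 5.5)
For (i), we first check that $\tau_w(f)=w\wedge f$ satisfies $(\tau_1)$; this holds because $(w\wedge f)\wedge g = f\wedge(w\wedge g)$. Next we identify the fixed points: $[\tau_w]=\{f\in E^{+}: f\le w\}=[0,w]$. The band generated by $[0,w]$ contains $w$, so it contains $B_w$. Conversely, $[0,w]\subseteq B_w$, so $B_{[\tau_w]}\subseteq B_w$. Hence $B_{[\tau_w]}=B_w$.

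For (ii), let $P$ be the band projection onto $B$. Then $P$ is a positive lattice homomorphism with $0\le Pf\le f$ for $f\ge 0$, so $P$ maps $E^{+}$ into $E^{+}$. We verify $(\tau_1)$ by decomposing. Write $f=Pf+(I-P)f$ and $g=Pg+(I-P)g$, with the components in $B$ and $B\dd$. Then
\[
Pf\wedge g = Pf\wedge Pg + Pf\wedge (I-P)g = Pf\wedge Pg,
\]
because the second term is a meet of disjoint positive elements; one justifies this via $(a+b)\wedge c$ and the fact that $Pf\wedge(I-P)g=0$. Symmetrically, $f\wedge Pg=Pf\wedge Pg$. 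Alternatively, use that $P$ is a lattice homomorphism:
\[
Pf\wedge g = P(Pf\wedge g)+(I-P)(Pf\wedge g),
\]
where the second summand is $(I-P)Pf\wedge (I-P)g=0$ and the first is $Pf\wedge Pg$. Both routes give $(\tau_1)$. Fixed points satisfy $Pf=f$ exactly when $f\in B^{+}$, so $[P]=B^{+}$. Since $B$ is a band, hence a Riesz subspace spanned by its positive part, $B_{[P]}=B_{B^{+}}=B$.

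For (iii), assume the projection property. Every truncation band is a band, hence a projection band. Now assume the truncation band projection property. By (i), every principal band $B_w$ equals the truncation band of $\tau_w$, hence is a projection band, which is the principal projection property.

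The only step needing care is the disjointness computation in (ii). I expect it to be routine once we use that $P$ and $I-P$ are Riesz homomorphisms with disjoint ranges.
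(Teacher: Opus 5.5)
Your proof is correct and follows the paper's argument essentially step for step. It uses the same identification $[\tau_w]=[0,w]$ in (i), the same decomposition along $B\oplus B\dd$ in (ii), and the same deduction from (i) in (iii).

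One small point in (ii): the paper passes directly from $Pf\wedge(I-P)g=0$ to $Pf\wedge g=Pf\wedge Pg$, using the identity $a\wedge(b+c)=a\wedge b$ whenever $a\wedge c=0$. That is cleaner than your intermediate sum-of-meets line, which holds only because $Pg\perp(I-P)g$. Your second route, through $P$ and $I-P$ being Riesz homomorphisms, is fully rigorous as written.
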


Our main result shows that the last implication in Proposition~\ref{prop:bands}(iii) can be reversed, thereby answering \cite[Problem~8.5]{Boulabiar-survey}.

\begin{theorem}
\label{thm:tbpp-equals-ppp}
A Riesz space $E$ has the truncation band projection property if and only if it has the principal projection property. Moreover, if $E$ has the principal projection property, then for every truncation $\tau$ on $E$ and every $z \in E^{+}$, the band projection onto $B_{[\tau]}$ is computed by the truncation itself:
\[
P_{B_{[\tau]}}(z) = \sup_{n} \big( z \wedge n\,\tau(z) \big).
\]
\end{theorem}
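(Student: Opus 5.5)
The forward implication is already Proposition~\ref{prop:bands}(iii), so the plan is to prove the converse. Assume $E$ has the principal projection property, and fix a truncation $\tau$ on $E$. By \cite[Theorem~25.1]{Luxemburg-Zaanen}, $E$ is Archimedean. The idea is to show that the truncation band $B_{[\tau]}$ is, "pointwise", the principal band generated by $\tau(z)$. More precisely, we aim to show that for each $z\in E^{+}$ the element
\[
p_z := \sup_n \big(z\wedge n\tau(z)\big),
\]
which exists because $B_{\tau(z)}$ is a projection band and by formula (1), is the supremum of $B_{[\tau]}^{+}\cap[0,z]$. Once this is established, the criterion \cite[Theorem~1.41]{Aliprantis-Burkinshaw} shows that $B_{[\tau]}$ is a projection band with $P_{B_{[\tau]}}(z)=p_z$. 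This gives both the equivalence and the displayed formula.

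The first step is to show that $p_z\in B_{[\tau]}$. Since $\tau(z)\in[\tau]$ by Lemma~\ref{lem:truncation-basics}(i), each $z\wedge n\tau(z)$ lies in the ideal generated by $[\tau]$. The supremum $p_z$ of an increasing sequence in $B_{\tau(z)}\subseteq B_{[\tau]}$ then stays in the band, because bands are closed under existing suprema. Equivalently, $p_z=P_{\tau(z)}z\in B_{\tau(z)}$. Clearly $p_z\le z$.

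The second step, which is the main point, is to show that every $h\in B_{[\tau]}^{+}$ with $h\le z$ satisfies $h\le p_z$. Writing $P$ for the band projection onto $B_{\tau(z)}$, it suffices to show $h\in B_{\tau(z)}$, i.e.\ $h\perp (I-P)z$. Put $q:=(I-P)z$. Then $q\le z$ and $q\perp\tau(z)$. By Lemma~\ref{lem:truncation-basics}(ii), $\tau(q)\le\tau(z)$, and by Lemma~\ref{lem:truncation-basics}(i), $\tau(q)\le q$. Hence $\tau(q)\le \tau(z)\wedge q=0$. By Lemma~\ref{lem:truncation-basics}(iv), $q$ is disjoint from $[\tau]$, hence from the band $B_{[\tau]}$, and in particular $q\perp h$. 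Then $h=Ph+(I-P)h$, and $0\le (I-P)h\le (I-P)z=q$ forces $(I-P)h\perp h$ and so $(I-P)h=0$. Therefore $h=Ph\le Pz=p_z$.

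Thus $p_z=\sup\big(B_{[\tau]}^{+}\cap[0,z]\big)$ belongs to $B_{[\tau]}$ for every $z\in E^{+}$, and the criterion applies. I expect the only delicate point to be the correct use of Lemma~\ref{lem:truncation-basics}(iv) to convert "$\tau(q)=0$" into disjointness from the whole truncation band. This is also where the argument uses no completeness and no Archimedean hypothesis beyond what the principal projection property already provides.
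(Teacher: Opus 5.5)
Your proof is correct and follows the paper's argument. Both proofs reduce to the projection-band criterion, identify $\sup\big(B_{[\tau]}^{+}\cap[0,z]\big)$ with the band projection of $z$ onto $B_{\tau(z)}$, and rest on the estimate $\tau(c)\le\tau(z)\wedge c=0$ combined with Lemma~\ref{lem:truncation-basics}(iv). The only difference is cosmetic: you apply this estimate once, to $q=(I-P)z$, whereas the paper applies it to $c=|h|\wedge b$ for every $h\perp\tau(z)$ in order to show $b\in\{\tau(z)\}^{dd}$.
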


In particular, in a Riesz space with the principal projection property, the truncation bands are exactly the projection bands, and no Archimedean or completeness assumption is needed for this equivalence, the Archimedean property being in any case a consequence of the principal projection property (Section~\ref{subsec:projection}).

\subsection{Truncated $AM$- and $AL$-spaces (Problem~8.4)}
\label{subsec:results-am-al}

Boulabiar's \cite[Problem~8.4]{Boulabiar-survey} asks what can be said about truncated $AM$- and $AL$-spaces, and in particular whether the $AM$ and $AL$ properties transfer between a truncated normed Riesz space $E$ and its canonical unitization $E \oplus \R$. We show that the two halves of this question behave quite differently. Its substance lies entirely on the nonunital side. The unital case reduces to an elementary, presumably well-known fact about direct sums of projection bands.

\subsubsection{The unital case}
\label{subsubsec:unital-am-al}

Suppose $E$ is unital with truncation unit $u$, so that $E \oplus \R = E \oplus \R(\mathbf 1 - u)$ is the direct sum of the projection bands $E$ and $\R(\mathbf 1-u)$ (Section~\ref{subsec:unitization}). The transfer of the $AL$, $AM$, and $KB$ properties is then an instance of the following elementary lemma about lattice norms on a direct sum of two projection bands, which we record for completeness.

\begin{lemma}
\label{lem:direct-sum-norms}
Let $X = Y \oplus Z$ be the direct sum of two projection bands $Y, Z$ of a Riesz space $X$.
\begin{enumerate}[label=(\roman*)]
\item The lattice norms on $X$ making it an $AL$-space are exactly the norms $\|y+z\| = \|y\|_Y + \|z\|_Z$, where $\|\cdot\|_Y$ and $\|\cdot\|_Z$ are $AL$-norms on $Y$ and $Z$.
\item The lattice norms on $X$ making it an $AM$-space are exactly the norms $\|y+z\| = \|y\|_Y \vee \|z\|_Z$, where $\|\cdot\|_Y$ and $\|\cdot\|_Z$ are $AM$-norms on $Y$ and $Z$.
\item A lattice norm on $X$ makes it a $KB$-space if and only if its restrictions to $Y$ and $Z$ make $Y$ and $Z$ into $KB$-spaces.
\end{enumerate}
\end{lemma}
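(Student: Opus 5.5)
The plan is to work with the band projections $P_Y, P_Z$ and the Riesz isomorphism $x \mapsto (P_Y x, P_Z x)$. The two facts used throughout are that $|y+z| = |y| + |z| = |y| \vee |z|$ for $y \in Y$, $z \in Z$, since $Y \perp Z$, and that $y \wedge z' = 0$ whenever $y \in Y^+$, $z' \in Z^+$. For each of (i)–(iii) there are two directions: a norm of the stated form has the property, and conversely every such norm has the stated form.

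\emph{Converse directions.} Let $\|\cdot\|$ be a lattice norm on $X$ and let $\|\cdot\|_Y, \|\cdot\|_Z$ be its restrictions. For $y \in Y$ and $z \in Z$, the lattice norm property gives $\|y+z\| = \||y|+|z|\|$, and $|y|, |z|$ are disjoint positive elements.
\begin{itemize}
\item If $\|\cdot\|$ is an $AL$-norm (additive on disjoint positives), then $\|y+z\| = \|y\|_Y + \|z\|_Z$.
\item If $\|\cdot\|$ is an $M$-norm, then $\|y+z\| = \||y| \vee |z|\| = \|y\|_Y \vee \|z\|_Z$.
\end{itemize}
The restrictions inherit additivity, resp.\ the $M$-property, on disjoint positive elements. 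Completeness of $Y$ and $Z$ follows because a band is norm closed in a normed Riesz space: $Y = Z\dd$ and disjoint complements are closed. So the restrictions are $AL$-, resp.\ $AM$-norms.

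\emph{Forward directions.} Given $AL$-norms (resp.\ $AM$-norms) on $Y$ and $Z$, define $\|y+z\|$ as the sum (resp.\ the maximum).
\begin{itemize}
\item \emph{Lattice norm.} If $|y+z| \le |y'+z'|$, apply the positive operators $P_Y$ and $P_Z$ to get $|y| \le |y'|$ and $|z| \le |z'|$, and use monotonicity in each coordinate.
\item \emph{Completeness.} Both the $\ell_1$- and $\ell_\infty$-sum of two Banach lattices are complete.
\item \emph{Disjoint positives, $AL$ case.} If $a \wedge b = 0$ with $a, b \ge 0$, then $P_Y a \wedge P_Y b = 0$ and likewise for $P_Z$, because band projections are Riesz homomorphisms. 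Hence
\[
\|a+b\| = \|P_Y a + P_Y b\|_Y + \|P_Z a + P_Z b\|_Z,
\]
and additivity in each band followed by regrouping gives $\|a\| + \|b\|$.
\item \emph{Disjoint positives, $M$ case.} Here $P_Y(a \vee b) = P_Y a \vee P_Y b$, so $\|a \vee b\| = (\|P_Y a\| \vee \|P_Y b\|) \vee (\|P_Z a\| \vee \|P_Z b\|)$. Regrouping the maxima gives $\|a\| \vee \|b\|$.
\end{itemize}

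\emph{Part (iii), KB.} If $X$ is $KB$, take a norm bounded increasing sequence in $Y^+$. It is also such a sequence in $X$, so it converges in $X$, and the limit lies in $Y$ since $Y$ is closed. Conversely, let $(x_n)$ be norm bounded and increasing in $X^+$. Then $(P_Y x_n)$ and $(P_Z x_n)$ are increasing and bounded, because $0 \le P_Y x_n \le x_n$. They converge in $Y$ and $Z$ by hypothesis, and since $x_n = P_Y x_n + P_Z x_n$, the sequence $(x_n)$ converges.

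The only point needing care is the interpretation of "$AL$-space" via disjoint additivity, as fixed in Section~\ref{subsec:normed}. With that definition, the disjointness-preserving property of band projections carries all of the above. No step is a genuine obstacle; the closest is checking that band projections preserve disjointness and suprema, which is standard.
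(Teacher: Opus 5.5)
Your proof follows the paper's route essentially step for step: band projections acting as Riesz homomorphisms give coordinatewise lattice operations for the ``if'' parts, the identity $|y+z|=|y|+|z|=|y|\vee|z|$ together with norm closedness of bands gives the converses, and projecting increasing sequences handles (iii). The one omission is in the ``if'' direction of (iii): a $KB$-space is by definition a Banach lattice, so you also need $X$ complete under an \emph{arbitrary} lattice norm whose restrictions to $Y$ and $Z$ are complete, which your $\ell_1$/$\ell_\infty$ remark does not cover because it only concerns the norms you constructed; the paper supplies this via $\max(\|P_Yv\|,\|P_Zv\|)\le\|v\|\le\|P_Yv\|+\|P_Zv\|$, which makes $\|\cdot\|$ equivalent to the $\ell_1$-sum norm.
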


Applying this with $Y = E$ and $Z = \R(\mathbf 1-u)$, on which every norm is automatically both an $AL$- and an $AM$-norm, Lemma~\ref{lem:direct-sum-norms} shows three things.
\begin{itemize}
\item $E$ is an $AL$-space if and only if $E \oplus \R$ is an $AL$-space under the norm
\[
\|x+r\| = \|x+ru\|_E + |r|,
\]
for some (every) choice of $|r|$-coefficient.
\item $E$ is an $AM$-space if and only if $E \oplus \R$ is an $AM$-space under
\[
\|x+r\| = \|x+ru\|_E \vee \lambda|r|
\]
for some $\lambda>0$, and this is a truncation unitization norm exactly when $0 < \lambda \le 1$.
\item And $E$ is a $KB$-space if and only if $E \oplus \R$ is a $KB$-space under any lattice norm extending $\|\cdot\|_E$.
\end{itemize}
We omit the straightforward verification. Note that no $AL$-norm extension of $\|\cdot\|_E$ is ever a truncation unitization norm, since it would force
\[
\|\mathbf 1\|_U = \|u\|_E + \|\mathbf 1 - u\|_{\R(\mathbf 1-u)} > \|u\|_E = 1.
\]

\subsubsection{The nonunital case}
\label{subsubsec:nonunital-am-al}

When $E$ is nonunital, $E$ has trivial disjoint complement in $E \oplus \R$ and is order dense in it, so no band decomposition is available and Lemma~\ref{lem:direct-sum-norms} does not apply. This is where the genuine content of Problem~8.4 lies.

Our first observation is a purely order-theoretic criterion for unitality.

\begin{theorem}
\label{thm:unitality-criterion}
Let $\tau$ be a truncation on a Riesz space $E$. Then $\tau$ is unital if and only if $[\tau]$ has a supremum in $E$, in which case the truncation unit is $\sup[\tau]$. Consequently, if $E$ is a Banach lattice with the Levi property and $\tau$ is bounded, then $\tau$ is unital. In particular, every bounded truncation on a $KB$-space, or on a dual Banach lattice such as $\ell^\infty$ or $L^\infty(\mu)$ for $\sigma$-finite $\mu$, is unital.
\end{theorem}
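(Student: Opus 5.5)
The plan is to prove the order-theoretic equivalence first and then get the Banach lattice consequences from it.

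\emph{Unital implies a supremum.} Suppose $\tau$ is unital with truncation unit $u$. Since $u\wedge u=u$, we have $\tau(u)=u$, so $u\in[\tau]$. If $f\in[\tau]$, then $f=\tau(f)=u\wedge f\le u$. Hence $u$ is the largest element of $[\tau]$, and in particular $u=\sup[\tau]$.

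\emph{A supremum implies unital.} Conversely, let $u=\sup[\tau]$ exist in $E$. Fix $f\in E^{+}$.
\begin{itemize}
\item[(a)] For every $g\in[\tau]$, Lemma~\ref{lem:truncation-basics}(iii) gives $g\wedge f\le\tau(f)$.
\item[(b)] Every Riesz space is infinitely distributive: whenever $\sup A$ exists, $\sup_{g\in A}(g\wedge f)$ exists and equals $(\sup A)\wedge f$. Taking the supremum in (a) therefore yields $u\wedge f\le\tau(f)$.
\item[(c)] By Lemma~\ref{lem:truncation-basics}(i), $\tau(f)\le f$ and $\tau(f)=\tau(\tau(f))$, so $\tau(f)\in[\tau]$ and hence $\tau(f)\le u$.
\item[(d)] Combining (b) and (c), $\tau(f)\le u\wedge f$.
\end{itemize}
So $\tau(f)=u\wedge f$ for every $f\in E^{+}$. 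Uniqueness of the truncation unit then identifies it with $\sup[\tau]$.

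\emph{Banach lattices with the Levi property.} To apply the Levi property, I would show that $[\tau]$ is upward directed and norm bounded.
\begin{itemize}
\item Norm boundedness: each $f\in[\tau]$ satisfies $\|f\|=\|\tau(f)\|\le\sup_{h\in E^{+}}\|\tau(h)\|<\infty$, because $\tau$ is bounded.
\item Directedness: take $f,g\in[\tau]$. Then $f\le f\vee g$, so $f=\tau(f)\le\tau(f\vee g)$ by Lemma~\ref{lem:truncation-basics}(ii), and similarly $g\le\tau(f\vee g)$. Since $\tau(f\vee g)\in[\tau]$ by Lemma~\ref{lem:truncation-basics}(i), it is an upper bound of $f,g$ lying in $[\tau]$.
\end{itemize}
I had first tried to show that $f\vee g\in[\tau]$ directly, via $f\vee g\le f+g$ and subadditivity, but that route stalls. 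The monotonicity argument above avoids the difficulty.

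With these two facts, the Levi property provides $\sup[\tau]$, and the first part shows that $\tau$ is unital. The final sentence of the theorem follows from Theorem~\ref{thm:classical}(i) and (iv). $\ell^\infty$ and $L^\infty(\mu)$ for $\sigma$-finite $\mu$ are dual Banach lattices, namely $(\ell^1)^*$ and $(L^1(\mu))^*$.

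The only potential obstacle is the infinite distributive law in step (b). It is standard (Luxemburg--Zaanen; Aliprantis--Burkinshaw, Theorem~1.8) and holds in any Riesz space without an Archimedean hypothesis, so I will cite it.
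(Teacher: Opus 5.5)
Your proof is correct and follows the paper's argument essentially step for step. The converse rests on the same infinite distributive law and exchange identity: you split it into $u\wedge f\le\tau(f)$ via Lemma~\ref{lem:truncation-basics}(iii) and $\tau(f)\le u\wedge f$ via $\tau(f)\in[\tau]$, where the paper writes one chain of equalities; the Levi step uses the same witness $\tau(f\vee g)$ for directedness, and for dual Banach lattices you cite Theorem~\ref{thm:classical}(iv) where the paper sketches a weak-$*$ limit point argument.
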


Theorem~\ref{thm:unitality-criterion} is a normed counterpart of the fact that every Archimedean truncation on a universally complete Riesz space is unital \cite[Theorem~4.5]{Boulabiar-survey}, universal completeness and the Archimedean axiom on $\tau$ being replaced, respectively, by the Levi property and by boundedness of $\tau$. It identifies $c_0$ as essentially the only obstruction to unitality among truncated Banach lattices.

\begin{theorem}
\label{thm:c0-embeds}
Let $E$ be a nonunital truncated Banach lattice whose truncation is bounded. Then $E$ contains a closed Riesz subspace lattice isomorphic to $c_0$. In particular, $E$ is neither reflexive nor weakly sequentially complete, and $E$ is not an $AL$-space.
\end{theorem}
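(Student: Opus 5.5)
The plan is to argue by contraposition through Theorem~\ref{thm:unitality-criterion} and Theorem~\ref{thm:classical}. Suppose $E$ is a truncated Banach lattice with bounded truncation $\tau$ and $E$ contains no closed Riesz subspace lattice isomorphic to $c_0$. By Theorem~\ref{thm:classical}(ii), $E$ is then a $KB$-space. By Theorem~\ref{thm:classical}(i), every $KB$-space has the Levi property. Theorem~\ref{thm:unitality-criterion} then says that a bounded truncation on a Banach lattice with the Levi property is unital. This contradicts the nonunitality of $\tau$, so $E$ must contain a closed sublattice copy of $c_0$.

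It is worth recording why boundedness matters in the unitality step, since that is where the real content lies. The set $[\tau]$ is upward directed, because for $f,g\in[\tau]$ the element $f\vee g$ is again a fixed point. Indeed, by Lemma~\ref{lem:birkhoff}(ii) and Lemma~\ref{lem:truncation-basics}(ii),(iii) one checks $f\vee g\le \tau(f\vee g)$: we have $f=\tau(f)\le\tau(f\vee g)$, and similarly for $g$. The set $[\tau]$ is also norm bounded, since $\|f\|=\|\tau(f)\|\le\sup_{h}\|\tau(h)\|<\infty$. So the Levi property gives $\sup[\tau]$, and Theorem~\ref{thm:unitality-criterion} applies. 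I will simply cite that theorem rather than redo this.

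The consequences follow from the same dichotomy. Since $E$ is not a $KB$-space, Theorem~\ref{thm:classical}(ii) shows that $E$ is not weakly sequentially complete. Theorem~\ref{thm:classical}(iii) gives that reflexive spaces and $AL$-spaces are $KB$-spaces, so $E$ is neither.

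The main obstacle is not in this argument but in the precise form of the cited facts. In particular, the equivalence ``$KB$ iff no lattice copy of $c_0$'' must be available in the form of a \emph{closed Riesz subspace} lattice isomorphic to $c_0$, which is how Meyer-Nieberg's Theorem~2.5.6 is recorded in Theorem~\ref{thm:classical}(ii). Once these facts are accepted as stated earlier in the paper, the proof is a short chain of implications.
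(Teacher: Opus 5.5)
Your proposal is correct and follows the paper's proof essentially verbatim. Nonunitality plus boundedness rules out the $KB$ property via Theorem~\ref{thm:unitality-criterion} and Theorem~\ref{thm:classical}(i). Theorem~\ref{thm:classical}(ii) then yields the closed sublattice copy of $c_0$ and the failure of weak sequential completeness, and Theorem~\ref{thm:classical}(iii) excludes reflexivity and the $AL$ property. Your side check that $[\tau]$ is upward directed and norm bounded is also correct, although Lemma~\ref{lem:birkhoff}(ii) is not actually needed for it.
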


The $AL$ half of \cite[Problem~8.4]{Boulabiar-survey} can now be settled completely: the $AL$ property never passes to the canonical unitization under a truncation unitization norm.

\begin{corollary}
\label{cor:never-AL}
Let $(E, \|\cdot\|_E, \tau)$ be a truncated normed Riesz space. Then $E \oplus \R$ is not an $AL$-space under any truncation unitization norm.
\end{corollary}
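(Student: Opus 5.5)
The plan is to argue by contradiction and split according to whether $\tau$ is unital. In each case I would exploit a single consequence of the $AL$ property, additivity of the norm on disjoint positive elements. Suppose $\|\cdot\|_U$ is a truncation unitization norm on $E \oplus \R$ under which $E \oplus \R$ is an $AL$-space. Then $E \oplus \R$ is in particular a Banach lattice under $\|\cdot\|_U$. By the last assertion of Theorem~\ref{thm:extreme-norms}, $E$ is then a Banach lattice under $\|\cdot\|_E$, which is the restriction of $\|\cdot\|_U$ to $E$.

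\emph{Unital case.} Let $u$ be the truncation unit, so $\|u\|_E = 1$ by the normalization. By Section~\ref{subsec:unitization}, $E$ and $\R(\mathbf 1 - u)$ are complementary projection bands of $E \oplus \R$. Hence $u \wedge (\mathbf 1 - u) = 0$ with both elements positive. Moreover $\mathbf 1 - u \neq 0$, since $\mathbf 1 \notin E$. Additivity of $\|\cdot\|_U$ on disjoint positive elements then gives
\[
1 = \|\mathbf 1\|_U = \|u\|_U + \|\mathbf 1 - u\|_U = 1 + \|\mathbf 1 - u\|_U > 1,
\]
which is a contradiction. This is exactly the remark made after Lemma~\ref{lem:direct-sum-norms}, now justified directly from disjoint additivity.

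\emph{Nonunital case.} Here I would transfer the $AL$ property down to $E$ and then invoke Theorem~\ref{thm:c0-embeds}. Disjointness of $x, y \in E$ is the same in $E$ as in $E \oplus \R$, because $E$ is a Riesz subspace (indeed an ideal). The norm $\|\cdot\|_E$ is the restriction of $\|\cdot\|_U$. So $\|x+y\|_E = \|x\|_E + \|y\|_E$ whenever $x \wedge y = 0$ in $E$. Since $E$ is a Banach lattice by the first paragraph, $E$ is an $AL$-space. Its truncation is nonunital and bounded, since by the definition of a truncated normed Riesz space $\sup\{\|\tau(f)\|_E : f \in E^{+}\} = 1$. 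Theorem~\ref{thm:c0-embeds} then says that $E$ is not an $AL$-space, which is a contradiction.

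I expect no real obstacle, since the substantive work is in Theorem~\ref{thm:c0-embeds}, which we may assume. The only delicate points are bookkeeping. First, one must use Theorem~\ref{thm:extreme-norms} to obtain norm completeness of $E$, because an $AL$-space is by definition a Banach lattice. Second, one must check that the disjoint-additivity form of the $AL$ condition passes to a Riesz subspace, which is immediate because lattice operations and norms are inherited.
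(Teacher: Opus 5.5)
Your proposal is correct and follows essentially the same route as the paper: the unital/nonunital split, additivity on the disjoint positive elements $u$ and $\mathbf 1 - u$ in the unital case, and, in the nonunital case, completeness of $E$ from Theorem~\ref{thm:extreme-norms} plus restriction of the norm, giving an $AL$-space with bounded nonunital truncation, which contradicts Theorem~\ref{thm:c0-embeds}. The only cosmetic difference is that you obtain completeness of $E$ before the case split, whereas the paper uses it only in the nonunital case.
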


Norm completion changes the picture on the $AL$ side. A bounded truncation on an $AL$-space is always unital by Theorem~\ref{thm:c0-embeds}. Yet nonunital truncated normed Riesz spaces whose norm is additive on disjoint positive elements do exist (Example~\ref{ex:completion} below), and their completion absorbs the truncation unit.

\begin{theorem}
\label{thm:completion-unital}
Let $(E, \|\cdot\|_E, \tau)$ be a truncated normed Riesz space with norm completion $\overline E$. Then $\tau$ extends uniquely to a truncation $\overline\tau$ on $\overline E$ with
\[
\sup\{\|\overline\tau(f)\| : f \in \overline E^{+}\} = 1.
\]
If moreover $E$ is nonunital and $\|\cdot\|_E$ is additive on disjoint positive elements, then $(\overline E, \overline\tau)$ is a unital truncated $AL$-space whose truncation unit lies in $\overline E \setminus E$.
\end{theorem}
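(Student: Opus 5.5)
The plan has two parts: first extend $\tau$ to $\overline E$ by uniform continuity, then show that $\sup[\overline\tau]$ exists when the norm is additive on disjoint elements.

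\textbf{Step 1: the extension.} By Lemma~\ref{lem:birkhoff}(iii), $\|\tau(f)-\tau(g)\|_E \le \|f-g\|_E$ for $f,g\in E^{+}$. So $\tau$ is $1$-Lipschitz on $E^{+}$. Since $E^{+}$ is dense in $\overline E^{+}$ (take $f_n\to f$ in $E$; then $f_n^{+}\to f^{+}=f$), $\tau$ extends uniquely to a continuous map $\overline\tau:\overline E^{+}\to\overline E^{+}$. Uniqueness holds already among continuous extensions. Any truncation on $\overline E$ is itself $1$-Lipschitz by Lemma~\ref{lem:birkhoff}(iii), so uniqueness holds among all truncations extending $\tau$.

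The identity $(\tau_1)$ passes to the limit by continuity of the lattice operations. The norm of $\overline\tau(f)$ is a limit of norms $\|\tau(f_n)\|_E\le 1$, so the supremum is at most $1$. It equals $1$ because $\overline\tau$ extends $\tau$.

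\textbf{Step 2: candidate unit.} Assume now that $E$ is nonunital and the norm is additive on disjoint positive elements. Then $\overline E$ is an $AL$-space, because additivity on disjoint pairs passes to limits. I expect this to need a little care: approximate disjoint $x,y\in\overline E^{+}$ by $x_n,y_n\in E^{+}$ and replace them with $x_n-x_n\wedge y_n$ and $y_n-x_n\wedge y_n$, which are disjoint in $E$ and still converge to $x,y$.

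An $AL$-space is a $KB$-space, hence has the Levi property by Theorem~\ref{thm:classical}(i),(iii). Moreover $\overline\tau$ is bounded. So Theorem~\ref{thm:unitality-criterion} gives that $\overline\tau$ is unital with unit $u=\sup[\overline\tau]$.

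\textbf{Step 3: $u\notin E$.} Suppose $u\in E$. Then for $f\in E^{+}$ we have $\tau(f)=\overline\tau(f)=u\wedge f$, with the meet computed in $\overline E$. Since $E$ is a Riesz subspace, this meet coincides with the one in $E$, so $\tau$ would be unital on $E$. This contradicts the hypothesis.

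The main obstacle I anticipate is Step 2, namely checking that disjoint additivity on the dense sublattice $E$ really yields an $AL$-norm on $\overline E$. The disjoint-approximation trick above should handle it, using continuity of $\wedge$ and $\|\,(x_n\wedge y_n)\|\to\|x\wedge y\|=0$. An alternative route is to first show $\|x+y\|=\|x\|+\|y\|$ for all positive $x,y\in E$ via Kakutani, though that requires completeness. So I would rather verify additivity on all positive pairs directly in $\overline E$, decomposing $x,y$ through $x\wedge y$ only after showing that disjoint additivity already holds there.
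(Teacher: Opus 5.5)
Your proposal is correct and follows the paper's proof essentially step by step. The matching steps are the nonexpansive extension from the dense cone $E^{+}$ with uniqueness among all truncations, and the disjoint-approximation trick via $x_n - x_n\wedge y_n$, $y_n - x_n\wedge y_n$ to obtain the $AL$ property. They continue with $AL \Rightarrow KB \Rightarrow$ Levi plus Theorem~\ref{thm:unitality-criterion}, and the same contradiction if the unit lay in $E$.

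Your closing worry about verifying additivity on all positive pairs is unnecessary. The paper defines $AL$-spaces by additivity on disjoint positive elements and quotes the equivalence with the classical definition for Banach lattices, so the disjoint case you already handle suffices.
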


\begin{example}
\label{ex:completion}
Let $E = \{f \in C([0,1]) : f(0) = 0\}$ with $\|f\|_E = \int_0^1 |f(t)|\,dt$ and $\tau(f) = \mathbf 1_{[0,1]} \wedge f$. Then $(E, \|\cdot\|_E, \tau)$ is a nonunital truncated normed Riesz space whose norm is additive on disjoint positive elements, and $E$ is not norm complete. Its completion is $\overline E = L^1([0,1])$, the extended truncation is again $\overline\tau(f) = \mathbf 1_{[0,1]} \wedge f$, and the truncation unit $\mathbf 1_{[0,1]}$ lies in $L^1([0,1]) \setminus E$, illustrating Theorem~\ref{thm:completion-unital}.
\end{example}

By contrast, the $AM$ property is perfectly compatible with nonunitality, $c_0$ being the standard example. We show that the smallest truncation unitization norm is always an $M$-norm in this case, while the largest one need not be.

\begin{theorem}
\label{thm:smallest-is-M-norm}
Let $(E, \|\cdot\|_E, \tau)$ be a nonunital truncated normed Riesz space whose norm is an $M$-norm. Then the smallest truncation unitization norm $\|\cdot\|_S$ (Theorem~\ref{thm:extreme-norms}) is an $M$-norm on $E \oplus \R$.
\end{theorem}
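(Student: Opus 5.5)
We must show: for disjoint $a,b\in E\oplus\R$ (necessarily positive, since $a\wedge b=0$), $\|a\vee b\|_S=\|a\|_S\vee\|b\|_S$. Because $a\wedge b=0$, we have $a\vee b=a+b$ and $|a+b|=a+b$. By the definition of $\|\cdot\|_S$ in Theorem~\ref{thm:extreme-norms},
\[
\|a+b\|_S=\sup\{\|g\|_E : g\in E,\ |g|\le a+b\}.
\]
The inequality $\ge$ is immediate, since $\|\cdot\|_S$ is a lattice norm and $a,b\le a+b$. So the plan is to prove $\|g\|_E\le \|a\|_S\vee\|b\|_S$ for every $g\in E$ with $|g|\le a+b$. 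We may replace $g$ by $|g|$, so assume $0\le g\le a+b$.

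The key step uses the Riesz decomposition property in the Riesz space $E\oplus\R$. It writes $g=g_1+g_2$ with $0\le g_1\le a$ and $0\le g_2\le b$. Explicitly one may take $g_1=g\wedge a$ and $g_2=g-g\wedge a$. The latter satisfies $g_2=(g-a)^+\le b$, and since $a\perp b$, in fact $g_1=g\wedge a$ and $g_2=g\wedge b$.

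The point to check, and the main obstacle, is that $g_1,g_2$ lie in $E$ rather than merely in $E\oplus\R$. This holds because $E$ is an ideal in $E\oplus\R$ (indeed a maximal ideal, Section~\ref{subsec:unitization}): from $0\le g_1\le g\in E$ we get $g_1\in E$, and likewise $g_2\in E$. Then $g_1\wedge g_2\le a\wedge b=0$, so $g_1,g_2$ are disjoint elements of $E$. Since $\|\cdot\|_E$ is an $M$-norm,
\[
\|g\|_E=\|g_1\vee g_2\|_E=\|g_1\|_E\vee\|g_2\|_E.
\]
Finally, $0\le g_1\le a$ with $g_1\in E$ gives $\|g_1\|_E\le\|a\|_S$ by the very definition of $\|\cdot\|_S$, and similarly $\|g_2\|_E\le\|b\|_S$. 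Taking the supremum over $g$ yields $\|a\vee b\|_S\le\|a\|_S\vee\|b\|_S$.

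Nonunitality is used only so that $\|\cdot\|_S$, as given in Theorem~\ref{thm:extreme-norms}, is available as a truncation unitization norm. The argument itself needs only that $E$ is an ideal and that $\|\cdot\|_E$ is an $M$-norm. The step most worth double-checking is the identification $g_2=g\wedge b$, which relies on the standard fact that $g\le a+b$ with $a\perp b$ and $g\ge0$ implies $g=g\wedge a+g\wedge b$. If this causes trouble, the abstract Riesz decomposition property suffices: it produces the two pieces $g_1,g_2$ directly, and their disjointness follows from $g_1\wedge g_2\le a\wedge b=0$.
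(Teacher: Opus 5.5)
Your proposal is correct and follows essentially the same argument as the paper. The paper writes $|g|=(|g|\wedge\varphi)\vee(|g|\wedge\psi)$ by distributivity, which gives exactly your disjoint pieces $g\wedge a$ and $g\wedge b$ (your sum equals their join because they are disjoint). From there it uses, as you do, that $E$ is an ideal of $E\oplus\R$ and that $\|\cdot\|_E$ is an $M$-norm, which bounds $\|g\|_E$ by $\|a\|_S\vee\|b\|_S$.
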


\begin{example}
\label{ex:S-vs-L}
\begin{enumerate}[label=(\roman*)]
\item Let $E = c_0$ with the supremum norm and $\tau(f) = \mathbf 1 \wedge f$ (pointwise meet with the constant sequence $\mathbf 1$). Identifying $E \oplus \R$ with $c$, the space of convergent sequences, via $x + r \mapsto x + r\mathbf 1$, one checks that $\|\cdot\|_S$ and $\|\cdot\|_L$ both coincide with the supremum norm of $c$. Hence \emph{all} truncation unitization norms on $c_0 \oplus \R$ coincide, and Theorem~\ref{thm:smallest-is-M-norm} reduces to the classical fact that the supremum norm of $c$ is an $M$-norm.
\item Let $E = C_0(\R)$ with the supremum norm, and let $e \in C_b(\R)$ be defined by $e(t) = 1$ for $t \le 0$ and $e(t) = (1+t)^{-1}$ for $t \ge 0$, so $0 < e \le 1$, $\sup e = 1$, and $e \notin C_0(\R)$. Then $\tau(f) = e \wedge f$ is a bounded, nonunital truncation on $E$ satisfying $(\tau_2)$, and $e$ identifies $E \oplus \R$ with $E + \R e \subset C_b(\R)$. There exists $\varphi \in (E\oplus\R)^{+}$ with $\|\varphi\|_S \ne \|\varphi\|_L$, and there exist disjoint $x, \psi \in (E\oplus\R)^{+}$ with $\|x \vee \psi\|_L > \|x\|_L \vee \|\psi\|_L$. So $\|\cdot\|_S$ and $\|\cdot\|_L$ are distinct, though equivalent, truncation unitization norms on $E \oplus \R$, and $\|\cdot\|_L$ fails to be an $M$-norm. This shows that Theorem~\ref{thm:smallest-is-M-norm} does not extend to arbitrary truncation unitization norms.
\end{enumerate}
\end{example}

\begin{problem}
\label{prob:unique-norm}
Characterize the nonunital truncated normed Riesz spaces whose norm is an $M$-norm and for which $\|\cdot\|_S = \|\cdot\|_L$, i.e., for which the truncation unitization norm is unique. Example~\ref{ex:S-vs-L}(i) shows this holds for $c_0$, while part (ii) shows it may fail. We regard this question as the part of \cite[Problem~8.4]{Boulabiar-survey} that remains open.
\end{problem}

\subsection{Truncation homomorphisms with Archimedean domain (Problem~8.1)}
\label{subsec:results-hom}

We answer \cite[Problem~8.1]{Boulabiar-survey} negatively by an explicit, choice-free, surjective counterexample, show that Theorem~\ref{thm:BH} is optimal in a strong sense, and then locate the exact dividing line inside the codomain: every truncation homomorphism into an \emph{Archimedean} Riesz space is a Riesz homomorphism \emph{modulo truncation infinitesimals}, whatever the truncation on that codomain.

\subsubsection{A negative answer}
\label{subsubsec:hom-negative}

\begin{lemma}
\label{lem:c0-hom}
Equip $E = c_0$ with $\tau_E(f) = f \wedge \mathbf 1$ (meet computed in $\R^\N$). Then $\tau_E$ is a strong Archimedean truncation on $c_0$, and $f - \widetilde{\tau_E}(f) = (f^{+}-\mathbf 1)^{+} \in c_{00}$ for every $f \in c_0$.
\end{lemma}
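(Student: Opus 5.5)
The plan is to verify the lemma's four claims in turn, working coordinatewise in $\R^\N$: that $\tau_E$ is a truncation on $c_0$, that it is strong, that it is Archimedean, and that $f-\widetilde{\tau_E}(f)$ has the stated form and lies in $c_{00}$.

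\emph{Truncation.} For $f\in c_0^{+}$, the sequence $f\wedge\mathbf 1$ again tends to $0$ and is nonnegative, so $\tau_E$ maps $c_0^{+}$ into $c_0^{+}$. Since lattice operations in $c_0$ are pointwise, the identity $(\tau_1)$ reduces to checking
\[
(f\wedge\mathbf 1)\wedge g=f\wedge(g\wedge\mathbf 1)
\]
in $\R^\N$. This is just associativity and commutativity of $\wedge$.

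\emph{Strong.} Let $0\ne f\in c_0^{+}$. Then $\|f\|_\infty$ is finite and positive, so $\lambda=1/\|f\|_\infty$ gives $\lambda f\le\mathbf 1$, hence $\tau_E(\lambda f)=\lambda f$.

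\emph{Archimedean.} Suppose $\tau_E(nf)=nf$ for all $n$. Then $nf\le\mathbf 1$ for all $n$, so each coordinate satisfies $0\le f(k)\le 1/n$ for every $n$. Hence $f=0$. (Alternatively, strong together with this bound shows weakness as well.)

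\emph{The formula.} By the formula for the extended truncation recalled in Section~\ref{subsec:homomorphisms}, $\widetilde{\tau_E}(f)=\tau_E(f^{+})-f^{-}$. Writing $f=f^{+}-f^{-}$, this gives
\[
f-\widetilde{\tau_E}(f)=f^{+}-f^{+}\wedge\mathbf 1=(f^{+}-\mathbf 1)^{+},
\]
using the identity $a-a\wedge b=(a-b)^{+}$ in $\R^\N$. Finally, since $f\in c_0$, we have $f^{+}(k)<1$ for all but finitely many $k$. So $(f^{+}-\mathbf 1)^{+}$ has finite support, i.e.\ lies in $c_{00}$.

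No step is a real obstacle. The only point needing care is that meets with $\mathbf 1$ are computed in $\R^\N$, outside $c_0$, and this is legitimate because the results land back in $c_0$.
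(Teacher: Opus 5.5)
Your proposal is correct and matches the paper's proof: both check $(\tau_1)$ coordinatewise in $\R^\N$ via associativity of $\wedge$, get strongness from $\lambda = 1/\|f\|_\infty$, and get Archimedeanness from $f \le \tfrac1n \mathbf 1$ for all $n$. Both also derive the formula $f - \widetilde{\tau_E}(f) = f^{+} - f^{+}\wedge \mathbf 1 = (f^{+}-\mathbf 1)^{+}$, which has finite support because $f \in c_0$. Your explicit check that $\tau_E$ maps $c_0^{+}$ into $c_0^{+}$ is a small point the paper leaves implicit.
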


\begin{theorem}
\label{thm:hom-counterexample}
Let $E = c_0$ carry the truncation $\tau_E(f) = f \wedge \mathbf 1$, and let $F = c_0/c_{00}$ carry the identity truncation $\tau_F(g)=g$. Then $T = -\pi : c_0 \to c_0/c_{00}$ (where $\pi$ is the quotient map) is a surjective truncation homomorphism which is not a Riesz homomorphism, indeed not even a positive operator. In particular, \cite[Problem~8.1]{Boulabiar-survey} has a negative answer.
\end{theorem}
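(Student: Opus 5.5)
We need to check three things: $T$ is a truncation homomorphism, $T$ is onto, and $T$ is not positive.

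\emph{Truncation homomorphism.} The identity truncation on $F$ satisfies $\widetilde{\tau_F}=\mathrm{id}_F$ by Lemma~\ref{lem:identity-truncation}. The required identity $T\circ\widetilde{\tau_E}=\widetilde{\tau_F}\circ T$ therefore reduces to $T(\widetilde{\tau_E}(f))=T(f)$ for every $f\in c_0$. Equivalently, we need $\pi\big(f-\widetilde{\tau_E}(f)\big)=0$. By Lemma~\ref{lem:c0-hom}, $f-\widetilde{\tau_E}(f)=(f^{+}-\mathbf 1)^{+}\in c_{00}=\ker\pi$, which gives the identity.

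The lemma itself is routine, and I would verify it directly if needed. First, $\widetilde{\tau_E}(f)=(f^{+}\wedge\mathbf 1)-f^{-}$, so $f-\widetilde{\tau_E}(f)=f^{+}-f^{+}\wedge\mathbf 1=(f^{+}-\mathbf 1)^{+}$. Since $f\to0$, the coordinates with $f_n^{+}>1$ are finitely many, so this element lies in $c_{00}$. Linearity of $T$ is clear. Note that $T=-\pi$ is needed only to break positivity; $\pi$ itself would also be a truncation homomorphism.

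\emph{Surjectivity.} This is immediate, since $\pi$ is onto and so $-\pi$ is onto.

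\emph{Failure of positivity.} Here I need some $0\le f\in c_0$ with $\pi(f)\neq0$; then $T(f)=-\pi(f)$ is a nonzero negative element. I take $f=(1/n)_n\notin c_{00}$. Then $\pi(f)\ge0$, because $\pi$ is a Riesz homomorphism onto the quotient by the ideal $c_{00}$, and $\pi(f)\ne0$. Hence $T(f)\le0$ and $T(f)\ne0$, so $T(f)\notin F^{+}$. Thus $T$ is not positive, and a fortiori not a Riesz homomorphism. Since $\tau_E$ is Archimedean (indeed strong) by Lemma~\ref{lem:c0-hom}, this answers Problem~8.1 negatively.

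The only point needing care is that $c_0/c_{00}$ is a genuine Riesz space in which $\pi$ is a Riesz homomorphism. This follows because $c_{00}$ is an ideal of $c_0$. The quotient is non-Archimedean, consistent with Theorem~\ref{thm:BH}, but that fact is not needed for the argument. I expect no real obstacle here; the main content sits in Lemma~\ref{lem:c0-hom}.
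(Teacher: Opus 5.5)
Your proposal is correct and follows essentially the same route as the paper. Like the paper, you reduce the intertwining identity to $f-\widetilde{\tau_E}(f)\in c_{00}=\ker\pi$ via Lemma~\ref{lem:c0-hom} and $\widetilde{\tau_F}=\mathrm{id}_F$, then use $f=(1/n)_n$ to get $T(f)<0$. The only cosmetic difference is that you deduce that $T$ is not a Riesz homomorphism from its non-positivity, whereas the paper directly exhibits $T(f\vee 0)\neq T(f)\vee T(0)$.
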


\begin{remark}
\label{rem:hom-mechanism}
When $F$ carries the identity truncation, $\widetilde{\tau_F} = \mathrm{id}_F$, and the intertwining condition collapses to $T$ vanishing on
\[
N_E := \operatorname{span}\{g - \tau_E(g) : g \in E^{+}\}.
\]
For $E=c_0$ as above, $N_E = c_{00}$. The Archimedean hypothesis on $\tau_E$ plays no protective role here. It never enters the intertwining condition, which is governed entirely by $\widetilde{\tau_F}$. The truncation $\tau_E$ above is even strong (Lemma~\ref{lem:c0-hom}). The identity truncation on the codomain, on the other hand, is weak but not Archimedean (Lemma~\ref{lem:identity-truncation}). So the negative answer persists even if ``Archimedean'' is strengthened to ``strong Archimedean'' on the domain, and ``arbitrary truncation'' is weakened to ``weak truncation'' on the codomain. The whole construction is choice-free.
\end{remark}

\subsubsection{Optimality of Theorem~\ref{thm:BH}}
\label{subsubsec:hom-optimal}

Let $u = (1,\tfrac12,\tfrac13,\dots) \in c_0^{+}$ and $D = \R u \oplus c_{00} \subseteq c_0$. Since $u$ has full support, every element of $D$ is uniquely $\alpha u + g$ with $\alpha \in \R$ and $g \in c_{00}$. So $\lambda(\alpha u + g) := \alpha$ defines a linear functional on $D$ that vanishes on $c_{00}$, with $\lambda(u)=1$.

\begin{lemma}
\label{lem:test-domain}
$D$ is a Riesz subspace of $c_0$, and $\tau_D(f) = f \wedge \mathbf 1$ is a strong Archimedean truncation on $D$ with $f - \widetilde{\tau_D}(f) = (f^{+}-\mathbf1)^{+} \in c_{00}$ for all $f \in D$.
\end{lemma}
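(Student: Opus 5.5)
The plan is to verify the claims one at a time by direct coordinatewise computation, treating $D$ as a Riesz subspace of $\R^\N$ and using Lemma~\ref{lem:c0-hom} wherever possible.

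\textbf{$D$ is a Riesz subspace.} It suffices to show $|\alpha u + g| \in D$ for $\alpha \in \R$ and $g \in c_{00}$. Choose $N$ with $g_k = 0$ for $k > N$. For $k > N$ the $k$-th coordinate of $|\alpha u+g|$ is $|\alpha| u_k$, so $|\alpha u + g| - |\alpha| u$ vanishes beyond $N$. Hence
\[
|\alpha u + g| = |\alpha| u + h
\]
for some $h \in c_{00}$, which lies in $D$. Lattice operations in $c_0$ are coordinatewise, so $D$ is a Riesz subspace of $c_0$.

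\textbf{$\tau_D$ is a well-defined truncation.} For $f \in D^{+}$, write $f = \alpha u + g$ with $\alpha \ge 0$; the coefficient $\alpha$ is nonnegative because $f_k = \alpha u_k$ eventually and $u_k > 0$. Since $f \in c_0$, only finitely many coordinates of $f$ exceed $1$. Therefore $f - f\wedge \mathbf 1 = (f-\mathbf 1)^{+} \in c_{00}$, and so $f \wedge \mathbf 1 = f - (f-\mathbf 1)^+ \in D^{+}$. The identity $(\tau_1)$ holds coordinatewise, because $(a\wedge 1)\wedge b = a \wedge (b \wedge 1)$ in $\R$.

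\textbf{Strong and Archimedean.} Given $0 \ne f \in D^{+}$, the sequence $f$ is bounded with $\|f\|_\infty>0$. Put $\lambda = 1/\|f\|_\infty$; then $\lambda f \le \mathbf 1$, so $\tau_D(\lambda f) = \lambda f$, and $\tau_D$ is strong. For the Archimedean axiom $(\tau_3)$, suppose $nf \wedge \mathbf 1 = nf$ for all $n$. Then $nf \le \mathbf 1$ for every $n$, which forces $f = 0$ coordinatewise.

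\textbf{The identity for $f - \widetilde{\tau_D}(f)$.} For $f \in D$ we have $\widetilde{\tau_D}(f) = f^{+} \wedge \mathbf 1 - f^{-}$, so
\[
f - \widetilde{\tau_D}(f) = f^{+} - f^{+}\wedge\mathbf 1 = (f^{+} - \mathbf 1)^{+}.
\]
This lies in $c_{00}$ by the finiteness argument above. All of these are the same computations as in Lemma~\ref{lem:c0-hom} restricted to $D$, so that lemma could simply be cited for the coordinatewise facts.

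The only point needing care is the closure of $D$ under $|\cdot|$ and under $f\mapsto f\wedge\mathbf 1$. Both rest on the same observation: modifications on finitely many coordinates stay in $c_{00}$.
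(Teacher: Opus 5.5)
Your proof is correct and follows the paper's argument. Like the paper, you reduce the truncation claims to the coordinatewise computations of Lemma~\ref{lem:c0-hom}, using that $f\wedge\mathbf 1 = f-(f-\mathbf 1)^{+}$ differs from $f$ by an element of $c_{00}$. The only cosmetic difference is how you show $D$ is a Riesz subspace: you check closure under $|\cdot|$ in one step via $|\alpha u+g| = |\alpha|u + h$ with $h\in c_{00}$, whereas the paper checks closure under $f\mapsto f^{+}$ by splitting into the cases $\alpha>0$, $\alpha<0$ and $\alpha=0$.
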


\begin{theorem}
\label{thm:rank-one}
Let $F$ be a Riesz space with a non-Archimedean truncation $\tau_F$, and pick $0 \ne g \in F_0(\tau_F)$, $g_0 = |g| > 0$. Then $T : D \to F$, $T(f) = -\lambda(f) g_0$, is a truncation homomorphism from $(D,\tau_D)$ into $(F,\tau_F)$ that is neither a Riesz homomorphism nor a positive operator.
\end{theorem}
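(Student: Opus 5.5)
We must verify that $T\circ\widetilde{\tau_D} = \widetilde{\tau_F}\circ T$ on all of $D$, and that $T$ is not positive. Both sides are computed separately and then compared.

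\emph{Left-hand side.} By Lemma~\ref{lem:test-domain}, $f - \widetilde{\tau_D}(f) \in c_{00}$ for every $f \in D$. Since $\lambda$ vanishes on $c_{00}$, we get $\lambda(\widetilde{\tau_D}(f)) = \lambda(f)$. Hence $T(\widetilde{\tau_D}(f)) = -\lambda(f) g_0 = T(f)$.

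\emph{Right-hand side.} It remains to show that $\widetilde{\tau_F}$ fixes every real multiple $c g_0$, i.e.\ $\widetilde{\tau_F}(c g_0) = c g_0$ for all $c \in \R$. Since $F_0(\tau_F)$ is a band, hence an ideal, $g_0 = |g| \in F_0(\tau_F)$, so $\tau_F(n g_0) = n g_0$ for all $n \in \N$.

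For $c \le 0$, $(c g_0)^{+} = 0$. Note that $\tau_F(0) = 0$ by Lemma~\ref{lem:truncation-basics}(i), since $\tau_F(0) \le 0$. Thus $\widetilde{\tau_F}(c g_0) = \tau_F(0) - (-c g_0) = c g_0$.

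For $c > 0$, pick $n \ge c$. Then $0 \le c g_0 \le n g_0$ with $n g_0 \in [\tau_F]$. By Lemma~\ref{lem:birkhoff}(i), $c g_0 \in [\tau_F]$, so $\widetilde{\tau_F}(c g_0) = \tau_F(c g_0) = c g_0$.

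Applying this with $c = -\lambda(f)$ gives $\widetilde{\tau_F}(T f) = T f = T(\widetilde{\tau_D} f)$. So $T$ is a truncation homomorphism.

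\emph{Failure of positivity.} We have $u \in D^{+}$ and $T(u) = -g_0$, which is not in $F^{+}$ because $g_0 > 0$. So $T$ is not positive, and therefore not a Riesz homomorphism, since Riesz homomorphisms are positive.

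\emph{Linearity.} This should be recorded too: $\lambda$ is linear, as stated before Lemma~\ref{lem:test-domain}, so $T$ is linear.

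The only step needing care is the right-hand side, where the fixed-point property must be extended from the integers $n$ to arbitrary real $c > 0$. Lemma~\ref{lem:birkhoff}(i) handles this directly.
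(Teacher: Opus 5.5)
Your proposal is correct and follows essentially the same route as the paper's proof: you use Lemma~\ref{lem:test-domain} to get $T\circ\widetilde{\tau_D}=T$, you check that $\widetilde{\tau_F}$ fixes every multiple $c g_0$ by separate cases on the sign of $c$, and you conclude from $T(u)=-g_0$. The differences are cosmetic. You invoke Lemma~\ref{lem:birkhoff}(i) where the paper applies Lemma~\ref{lem:truncation-basics}(iii) with $\lceil c\rceil g_0$, and you deduce that $T$ is not a Riesz homomorphism from its non-positivity, whereas the paper exhibits $T(u\vee 0)\ne T(u)\vee T(0)$ directly.
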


\begin{theorem}
\label{thm:optimality}
For a truncated Riesz space $(F,\tau_F)$, the following are equivalent.
\begin{enumerate}[label=(\roman*)]
\item $\tau_F$ is Archimedean.
\item Every truncation homomorphism $T : E \to F$, for every truncated Riesz space $(E,\tau_E)$, is a Riesz homomorphism.
\item The same statement holds with ``positive operator'' in place of ``Riesz homomorphism.''
\item Every truncation homomorphism from $(D,\tau_D)$ into $F$ is a Riesz homomorphism.
\item The same statement holds with ``positive operator.''
\end{enumerate}
\end{theorem}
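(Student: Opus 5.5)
The natural route is the cycle of implications (i)$\Rightarrow$(ii)$\Rightarrow$(iii)$\Rightarrow$(v)$\Rightarrow$(i), together with (ii)$\Rightarrow$(iv)$\Rightarrow$(v). The implication (i)$\Rightarrow$(ii) is exactly Theorem~\ref{thm:BH}. The step (ii)$\Rightarrow$(iii) is immediate, since every Riesz homomorphism is positive. Likewise (ii)$\Rightarrow$(iv) and (iii)$\Rightarrow$(v) hold by specializing $(E,\tau_E)$ to the test domain $(D,\tau_D)$, which is a legitimate truncated Riesz space by Lemma~\ref{lem:test-domain}. Finally (iv)$\Rightarrow$(v) holds for the same reason as (ii)$\Rightarrow$(iii).

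Everything then reduces to (v)$\Rightarrow$(i), which I would prove by contraposition using Theorem~\ref{thm:rank-one}. Suppose $\tau_F$ is not Archimedean. By \cite[Proposition~4.1]{Boulabiar-survey}, $F_0(\tau_F)\neq\{0\}$, so we may pick $0\ne g\in F_0(\tau_F)$ and set $g_0=|g|>0$. Theorem~\ref{thm:rank-one} then supplies the truncation homomorphism $T(f)=-\lambda(f)g_0$ from $(D,\tau_D)$ into $F$ that is not positive, contradicting (v).

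Since Theorem~\ref{thm:rank-one} is stated earlier and may be assumed, there is no real obstacle left. The only point needing care is the order of logical dependencies: (v) must be the weakest statement, so that the single test domain $D$ detects every failure. This is exactly what the chain above arranges.

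If one wanted to see why Theorem~\ref{thm:rank-one} holds, the key check is the intertwining identity $T(\widetilde{\tau_D}(f))=\widetilde{\tau_F}(T f)$. The left side equals $T f$, because $f-\widetilde{\tau_D}(f)\in c_{00}$ and $\lambda$ vanishes on $c_{00}$. For the right side, $Tf$ is a multiple of $g_0$, and $\widetilde{\tau_F}$ fixes every element of the band $F_0(\tau_F)$. To see this, write $h=h^+-h^-$ with $h^+\le |h|$, so Lemma~\ref{lem:birkhoff}(i) gives $\tau_F(h^+)=h^+$. Non-positivity then follows from $T(u)=-g_0<0$ with $u\ge0$.
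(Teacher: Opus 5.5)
Your proposal is correct and follows essentially the same route as the paper: you use Theorem~\ref{thm:BH} for (i)$\Rightarrow$(ii), trivial specializations for the middle implications, and the rank-one operator of Theorem~\ref{thm:rank-one} to refute (v) when $\tau_F$ is not Archimedean. The only difference is cosmetic. The paper derives both (iv)$\Rightarrow$(i) and (v)$\Rightarrow$(i) directly from Theorem~\ref{thm:rank-one}, whereas you route (iv) through (v).
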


Thus a single explicit test domain $(D,\tau_D)$ — Archimedean, carrying a strong Archimedean truncation — detects every failure of Archimedeanness in the codomain: no strengthening of the hypothesis on the domain in \cite[Problem~8.1]{Boulabiar-survey}, nor any weakening of the conclusion to mere positivity, can restore the implication. (Quantifying over the domain is essential: for a \emph{fixed} $E = \{0\}$, every operator into $F$ is vacuously a Riesz homomorphism regardless of $\tau_F$.)

\subsubsection{The dividing line: Riesz homomorphisms modulo truncation infinitesimals}
\label{subsubsec:hom-quotient}

Throughout, $(F,\tau)$ is a truncated Riesz space, $B = F_0(\tau)$, and $q : F \to F/B$ the quotient Riesz homomorphism.

\begin{proposition}
\label{prop:induced-truncation}
There is a unique truncation $\overline\tau$ on $F/B$ with $\overline\tau(q(g)) = q(\tau(g))$ for $g \in F^{+}$, and $q$ is then simultaneously a Riesz homomorphism and a truncation homomorphism from $(F,\tau)$ onto $(F/B, \overline\tau)$.
\end{proposition}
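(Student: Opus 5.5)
The plan is to define $\overline\tau$ on $(F/B)^{+}$ by $\overline\tau(q(g)) := q(\tau(g))$ for $g \in F^{+}$. We then check three things in turn: that this is well defined, that it satisfies $(\tau_1)$, and that $q$ intertwines the extended truncations. Uniqueness is automatic, since $q$ is a surjective Riesz homomorphism and therefore maps $F^{+}$ onto $(F/B)^{+}$; so every positive element of $F/B$ has the form $q(g)$ with $g \in F^{+}$, and $\overline\tau$ is forced on it.

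\textbf{Well-definedness.} This is the main step. Let $g, h \in F^{+}$ with $g - h \in B$. Lemma~\ref{lem:birkhoff}(iii) gives $|\tau(g) - \tau(h)| \le \tau(|g-h|) \le |g-h|$. Since $B$ is a band, hence solid, and $|g-h| \in B$, we get $\tau(g) - \tau(h) \in B$. Thus $q(\tau(g)) = q(\tau(h))$, and the value $\overline\tau(q(g))$ lies in $(F/B)^{+}$. Note that only the solidity of $B$ is used, together with the Birkhoff-type inequality. The specific infinitesimal nature of $B$ plays no role here; it matters only later, for the Archimedean property treated in Theorem~\ref{thm:main-hom}.

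\textbf{Axiom $(\tau_1)$.} Take $a = q(f)$ and $b = q(g)$ with $f, g \in F^{+}$. Because $q$ is a Riesz homomorphism,
\[
\overline\tau(a) \wedge b = q(\tau(f) \wedge g) = q(f \wedge \tau(g)) = a \wedge \overline\tau(b).
\]

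\textbf{Truncation homomorphism.} For $g \in F$ we have $q(g)^{+} = q(g^{+})$ and $q(g)^{-} = q(g^{-})$. Hence
\[
\widetilde{\overline\tau}(q(g)) = \overline\tau(q(g^{+})) - q(g^{-}) = q\big(\tau(g^{+}) - g^{-}\big) = q(\widetilde\tau(g)),
\]
that is, $q \circ \widetilde\tau = \widetilde{\overline\tau} \circ q$. Finally, $q$ is onto, and it is a Riesz homomorphism as the quotient map by an ideal.
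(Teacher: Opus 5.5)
Your proposal is correct and follows the paper's proof essentially step for step. Well-definedness comes from Lemma~\ref{lem:birkhoff}(iii) together with the solidity of $B$, axiom $(\tau_1)$ is pushed through the Riesz homomorphism $q$, and the intertwining identity uses $q(g)^{\pm}=q(g^{\pm})$. Your remark that only $\tau(|g-h|)\le|g-h|$ is needed (the paper instead uses $\tau(|g_1-g_2|)=|g_1-g_2|$) is accurate: it shows the induced truncation exists on the quotient by any ideal, though this makes no difference for the statement at hand.
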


\begin{lemma}
\label{lem:reduction}
If $\overline\tau$ on $F/B$ is Archimedean, then for every truncated Riesz space $(E,\tau_E)$ and every truncation homomorphism $T : E \to F$, the composition $q \circ T$ is a Riesz homomorphism.
\end{lemma}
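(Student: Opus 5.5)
The plan is to compose everything and apply Theorem~\ref{thm:BH} to the composite. Since $T : (E,\tau_E) \to (F,\tau)$ is a truncation homomorphism, we have $T \circ \widetilde{\tau_E} = \widetilde{\tau} \circ T$. By Proposition~\ref{prop:induced-truncation}, $q : (F,\tau) \to (F/B,\overline\tau)$ is also a truncation homomorphism, so $q \circ \widetilde\tau = \widetilde{\overline\tau} \circ q$.

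Composing the two identities gives
\[
(q\circ T)\circ \widetilde{\tau_E} = q \circ \widetilde\tau \circ T = \widetilde{\overline\tau}\circ (q\circ T).
\]
Hence $q \circ T : (E,\tau_E) \to (F/B,\overline\tau)$ is a linear truncation homomorphism. By hypothesis $\overline\tau$ is Archimedean, so Theorem~\ref{thm:BH}, applied with codomain $(F/B,\overline\tau)$, shows that $q\circ T$ is a Riesz homomorphism.

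The only point needing care is that the intertwining for $q$ holds at the level of extended truncations, not just on positive cones. Proposition~\ref{prop:induced-truncation} already states that $q$ is a truncation homomorphism in the paper's sense, so this is immediate. If one wants to verify it directly, fix $f\in F$ and use that $q$ is a Riesz homomorphism:
\[
q(\widetilde\tau(f)) = q(\tau(f^+)) - q(f^-) = \overline\tau(q(f)^+) - q(f)^- = \widetilde{\overline\tau}(q(f)).
\]
Here $q(f^\pm)=q(f)^\pm$, and $\overline\tau(q(g)) = q(\tau(g))$ for $g\ge 0$. No step is a genuine obstacle; the lemma is a formal composition argument.
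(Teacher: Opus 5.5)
Your proposal is correct and is the same argument as the paper's proof. You compose the intertwining identities for $T$ and for $q$ (the latter from Proposition~\ref{prop:induced-truncation}), so $q\circ T$ is a truncation homomorphism into the Archimedean $(F/B,\overline\tau)$, and Theorem~\ref{thm:BH} then finishes; your optional direct check of the intertwining for $q$ repeats the computation in the paper's proof of Proposition~\ref{prop:induced-truncation}.
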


Our main theorem shows that the hypothesis of Lemma~\ref{lem:reduction} always holds when $F$ itself is Archimedean, using the structure theorem of \cite[Theorem~3.8]{Boulabiar-structure}: on an Archimedean Riesz space $F$ with truncation $\tau$, embedded order densely in its universal completion $F^u$ (an $f$-algebra with multiplicative identity a fixed weak order unit $w$ of $F^u$), there are a component $p$ of $w$ and $e \in (F^u)^{+}$ disjoint from $p$ with $\tau(f) = pf + e \wedge f$ for $f \in F^{+}$.

\begin{theorem}
\label{thm:main-hom}
If $F$ is Archimedean, then for every truncation $\tau$ on $F$, the induced truncation $\overline\tau$ on $F/F_0(\tau)$ is Archimedean.
\end{theorem}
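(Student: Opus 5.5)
The plan is to make the condition $\overline\tau(n\,q(g)) = n\,q(g)$ explicit through the structure theorem \cite[Theorem~3.8]{Boulabiar-structure}, and then apply the Archimedean property of $F^u$. Embed $F$ order densely in its universal completion $F^u$, with multiplicative identity a weak order unit $w$. Write $\tau(f) = pf + e\wedge f$ for $f \in F^{+}$, where $p$ is a component of $w$ and $e \in (F^u)^{+}$ is disjoint from $p$. Put $p' := w - p$. Multiplication by $p$ and by $p'$ gives the two complementary band projections in $F^u$ onto the bands generated by $p$ and $p'$. In particular these multiplications are Riesz homomorphisms. Since $e \perp p$, we have $pe = 0$ and $p'e = e$, and therefore
\[
p'(e\wedge f) = e \wedge p'f, \qquad p(e\wedge f) = 0 \qquad (f \in (F^u)^{+}).
\]

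\emph{Step 1: describe $F_0(\tau)$.} Let $g \in F$ and $h := p'|g|$. Projecting the equality $\tau(n|g|) = n|g|$ onto the band of $p$ gives no condition. Projecting it onto the band of $p'$ gives $e \wedge nh = nh$, i.e.\ $nh \le e$. Since $F^u$ is Archimedean, requiring this for all $n$ forces $h = 0$. Conversely, if $p'|g| = 0$, then $\tau(n|g|) = pn|g| = n|g|$ for every $n$. Hence
\[
B = F_0(\tau) = \{g \in F : p'g = 0\}.
\]
Equivalently, $g \in B$ if and only if $p'(g_1 - g_2) = 0$ for any representatives, i.e.\ $q(g_1) = q(g_2)$ if and only if $p'g_1 = p'g_2$.

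\emph{Step 2: test the Archimedean axiom for $\overline\tau$.} A positive element of $F/B$ has the form $q(g)$ with $g \in F^{+}$: if $q(g) \ge 0$ then $q(g) = q(g^{+})$, because $q$ is a Riesz homomorphism. By Proposition~\ref{prop:induced-truncation}, $\overline\tau(n\,q(g)) = q(\tau(ng))$. So the hypothesis $\overline\tau(n\,q(g)) = n\,q(g)$ for all $n$ says that $\tau(ng) - ng \in B$ for all $n$. By Step 1 this means $p'\tau(ng) = p'ng$. Using the displayed identities, $p'\tau(ng) = e\wedge n\,p'g$, so the condition reads
\[
e \wedge n\,p'g = n\,p'g, \qquad\text{i.e.}\qquad n\,p'g \le e \quad\text{for all } n.
\]
The Archimedean property of $F^u$ then gives $p'g = 0$. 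Thus $g \in B$ by Step 1, and $q(g) = 0$, which proves that $\overline\tau$ is Archimedean.

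\emph{Main obstacle.} I expect the only delicate point to be the bookkeeping inside $F^u$. One must check that multiplication by $p'$ commutes with the meet against $e$, which holds because it is a band projection and $p'e = e$. One must also check that $\tau(ng)$, which a priori lies in $F$, can be manipulated in $F^u$ where the formula lives. Both follow from the order density of $F$ in $F^u$, together with the fact that Riesz subspace meets agree with those computed in $F^u$. Archimedeanity of $F$ is used only to guarantee the existence of $F^u$ and the structure theorem.
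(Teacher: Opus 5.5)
Your proposal is correct and follows essentially the same route as the paper: you use the structure theorem $\tau(f)=pf+e\wedge f$ in $F^u$, identify $F_0(\tau)$ with $F\cap B_p$ (your $\{g\in F: p'g=0\}$), and reduce the condition $ng-\tau(ng)\in F_0(\tau)$ to $n\,p'g\le e$ for all $n$, which the Archimedean property of $F^u$ kills. The only difference is cosmetic: you apply the band projection $p'$ directly, where the paper intersects $F_0(\tau)$ with $\{p\}^{d}$ to get $ng-\tau(ng)=0$.
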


\begin{corollary}
\label{cor:modulo-infinitesimals}
Let $F$ be Archimedean with an arbitrary truncation $\tau_F$, and $q : F \to F/F_0(\tau_F)$ the quotient map. For every truncated Riesz space $(E,\tau_E)$ and every truncation homomorphism $T : E \to F$, the composition $q \circ T$ is a Riesz homomorphism. Equivalently, $T(f\vee g) - T(f)\vee T(g) \in F_0(\tau_F)$ for all $f,g \in E$, and, in particular, $T(f)^{-} \in F_0(\tau_F)$ for all $f \in E^{+}$. In short, every truncation homomorphism into an Archimedean Riesz space is a Riesz homomorphism, and in particular positive, modulo truncation infinitesimals.
\end{corollary}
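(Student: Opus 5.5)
The plan is to obtain the corollary directly from Lemma~\ref{lem:reduction} combined with Theorem~\ref{thm:main-hom}, and then to restate the conclusion in elementwise form. Put $B = F_0(\tau_F)$. Since $F$ is Archimedean, Theorem~\ref{thm:main-hom} tells us that the induced truncation $\overline{\tau_F}$ on $F/B$, supplied by Proposition~\ref{prop:induced-truncation}, is Archimedean. This is exactly the hypothesis of Lemma~\ref{lem:reduction}, so for every truncated Riesz space $(E,\tau_E)$ and every truncation homomorphism $T : E \to F$, the composition $q \circ T$ is a Riesz homomorphism. That settles the first assertion.

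For the equivalent formulation I will use that $q$ is a Riesz homomorphism with kernel $B$. Then $q(T(f)\vee T(g)) = q(T(f)) \vee q(T(g))$. Consequently $q\circ T$ preserves finite suprema if and only if
\[
q\big(T(f\vee g) - T(f)\vee T(g)\big) = 0 \qquad (f,g \in E),
\]
that is, if and only if $T(f\vee g) - T(f)\vee T(g) \in B$ for all $f,g\in E$. A linear map between Riesz spaces preserving binary suprema is a Riesz homomorphism, so the two formulations are indeed equivalent.

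For the last consequence, take $f \in E^{+}$. Since $q\circ T$ is a Riesz homomorphism, it is positive, and moreover
\[
q\big(T(f)^{-}\big) = \big(q(T(f))\big)^{-} = 0 .
\]
Hence $T(f)^{-} \in B$. An alternative route would be to set $g = 0$ in the previous formula: this gives $T(f) - T(f)\vee 0 = -T(f)^{-} \in B$, and $B$ is an ideal.

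All the substance lies in Theorem~\ref{thm:main-hom}, which may be assumed here, so I do not expect a real obstacle. The only point needing care is the justification that $B$ is an ideal (indeed a band), which \cite[Proposition~4.1]{Boulabiar-survey} provides, so that $F/B$ is a Riesz space and $q$ is a Riesz homomorphism.
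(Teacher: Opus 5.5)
Your proposal is correct and follows essentially the same route as the paper: you feed Theorem~\ref{thm:main-hom} into Lemma~\ref{lem:reduction} for the first assertion, then read off both reformulations from $\ker q = F_0(\tau_F)$, exactly as the paper does. Your alternative derivation of $T(f)^{-}\in F_0(\tau_F)$ by taking $g=0$ is also valid, since $F_0(\tau_F)$ is an ideal.
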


When $F$ and $\tau_F$ are both Archimedean,
\[
F_0(\tau_F)=\{0\},
\]
so $q$ is the identity and Corollary~\ref{cor:modulo-infinitesimals} reduces to Theorem~\ref{thm:BH}. It is also consistent with the fact that the pathological examples above (and \cite[Example~2.9]{Boulabiar-survey}) all have range inside $F_0(\tau_F)$.

\begin{problem}
\label{prob:conjecture-strong}
Does Theorem~\ref{thm:main-hom} hold without the Archimedean hypothesis on $F$, i.e., is the induced truncation $\overline\tau$ on $F/F_0(\tau)$ Archimedean for \emph{every} truncated Riesz space $(F,\tau)$?
\end{problem}

\begin{proposition}
\label{prop:totally-ordered}
Problem~\ref{prob:conjecture-strong} has a positive answer whenever $\tau = \tau_v$ is unital and $F$ is totally ordered (a case not covered by Theorem~\ref{thm:main-hom}, since a totally ordered Riesz space of dimension $\ge 2$ is never Archimedean).
\end{proposition}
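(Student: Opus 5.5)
The plan is to make everything explicit in terms of the truncation unit $v$, using that in a totally ordered space every positive element is comparable with $v$. Since $\tau = \tau_v$ with $\tau_v(f) = v \wedge f$, a positive element $h$ is a fixed point of $\tau$ exactly when $h \le v$. Hence
\[
B := F_0(\tau) = \{ g \in F : n|g| \le v \text{ for all } n \in \N \}.
\]
By Proposition~\ref{prop:induced-truncation}, the induced truncation is determined by $\overline\tau(q(f)) = q(v \wedge f) = q(v) \wedge q(f)$, using that $q$ is a Riesz homomorphism. So $\overline\tau$ is unital with truncation unit $q(v)$, and its fixed points are the positive classes lying below $q(v)$.

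To prove that $\overline\tau$ is Archimedean, I take a positive class $\xi$ with $\overline\tau(n\xi) = n\xi$ for all $n$. Every positive class of $F/B$ has a positive representative (replace $f$ by $f^{+}$, noting $q(f^{+}) = q(f)^{+} = \xi$), so I may write $\xi = q(f)$ with $f \in F^{+}$. The hypothesis says $q(nf) \le q(v)$ for every $n$, i.e.\ $q\big((nf - v)^{+}\big) = 0$, i.e.\ $h_n := (nf-v)^{+} \in B$. The goal is to show $f \in B$, i.e.\ $mf \le v$ for all $m$.

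The key step, and the only place total order is used, is to get a uniform bound $nf \le 2v$ for every $n$. Since $F$ is totally ordered, either $nf \le v$ or $nf > v$. In the second case $h_n = nf - v > 0$ lies in $B$, so in particular $h_n \le v$ (take $n=1$ in the definition of $B$), giving $nf = v + h_n \le 2v$. In either case $nf \le 2v$. Applying this with $n = 2m$ gives $2mf \le 2v$, hence $mf \le v$ for all $m \in \N$, so $f \in B$ and $\xi = 0$. This shows $(\tau_3)$ for $\overline\tau$.

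I expect no real obstacle. The point needing care is the reduction to a positive representative and the identification of $F_0(\tau)$ with the set of elements that are infinitely small relative to $v$. Both follow directly from the definitions and from $q$ being a Riesz homomorphism. The parenthetical claim that a totally ordered Riesz space of dimension $\ge 2$ is non-Archimedean is classical. For two strictly positive non-proportional $a,b$, the set $\{t : ta \le b\}$ is bounded above by comparability, and its supremum $s$ yields a nonzero element $b - sa$ with $n|b-sa| \le a$ for all $n$. I would just cite this rather than prove it.
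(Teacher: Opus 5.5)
Your proof is correct, and it follows a different route from the paper's. The paper argues by contradiction. If $mg\not\le v$ for some $m$, comparability gives $h=mg-v>0$ in $B$. Then $2mg-v=v+2h\ge 0$ lies in $B$, and solidity forces $v\in B$, hence $v=0$ and $B=\{0\}$, contradicting $0<h\in B$. You instead read off the uniform bound $nf\le 2v$ from the $n=1$ instance of $h_n\in B$ and rescale with $n=2m$, which is shorter and direct.

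Your remark that the case split is ``the only place total order is used'' undersells your argument. The case split is superfluous, since $nf-v\le (nf-v)^{+}=h_n\le v$ holds in every Riesz space. So your proof shows, verbatim, that $\overline\tau$ is Archimedean for every unital truncation on an arbitrary Riesz space. The paper's route, by contrast, really does use comparability, to get $mg-v>0$ and $2mg-v\ge 0$.

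The same idea also appears to remove unitality. Let $\tau$ be an arbitrary truncation, and let $q(f)\in F_0(\overline\tau)^{+}$ with $f\in F^{+}$. Then $h_n:=nf-\tau(nf)\in F_0(\tau)$ is a positive fixed point of $\tau$, and so is $\tau(nf)$. Hence $w_n:=\tau(nf)\vee h_n\in[\tau]$ by Lemma~\ref{lem:truncation-basics}(i),(ii). Since $nf\le 2w_n$, Lemma~\ref{lem:birkhoff}(i) gives $\tfrac n2 f\in[\tau]$, and taking $n=2m$ yields $f\in F_0(\tau)$. If this is right, it answers Problem~\ref{prob:conjecture-strong} positively for every truncated Riesz space, and it recovers Theorem~\ref{thm:main-hom} without the structure theorem.
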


Finally, we record a condition on the operator itself (rather than on the codomain alone) guaranteeing a genuine Riesz homomorphism.

\begin{proposition}
\label{prop:ideal-condition}
Let $T : (E,\tau_E) \to (F,\tau_F)$ be a truncation homomorphism and let $I$ be the ideal of $F$ generated by $T(E)$. If $I \cap F_0(\tau_F) = \{0\}$, then $T$ is a Riesz homomorphism.
\end{proposition}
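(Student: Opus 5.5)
We must prove Proposition~\ref{prop:ideal-condition}: if the ideal $I$ generated by $T(E)$ meets $F_0(\tau_F)$ only in $\{0\}$, then $T$ is a Riesz homomorphism. The plan is to reduce this to Lemma~\ref{lem:reduction} and the quotient formalism of Proposition~\ref{prop:induced-truncation}, but without assuming that $\overline\tau$ is Archimedean. Instead we argue directly on the elements $T(f\vee g) - T(f)\vee T(g)$.

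It suffices to show $T(f^{+}) = T(f)^{+}$ for all $f \in E$, equivalently $T(f^+)\wedge T(f^-) = 0$ together with positivity. The key claim is that for every $f \in E$ the element
\[
d := T(f)^{-} \wedge T(f^{+}) \quad\text{or more generally}\quad h := |T(f^{+}) - T(f)^{+}|
\]
lies in $F_0(\tau_F)$. Since $h$ obviously lies in $I$ (it is dominated by $|T(f^+)|+|T(f)|$), the hypothesis $I\cap F_0(\tau_F)=\{0\}$ then forces $h=0$, and $T$ is a Riesz homomorphism.

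To place $h$ in $F_0(\tau_F)$, I would rerun the Boulabiar--Hajji argument behind Theorem~\ref{thm:BH}, keeping track of what it actually proves before the final use of $(\tau_3)$. Apply the intertwining identity $T\widetilde{\tau_E}(nf) = \widetilde{\tau_F}(nT f)$ for $n\in\N$. For $f \in E^{-}$ we have $\widetilde{\tau_E}(nf) = nf$, so $\widetilde{\tau_F}(nTf) = nTf$, i.e.\ $\tau_F(nT(f)^+) - nT(f)^- = nT(f)^+ - nT(f)^-$. Thus $\tau_F(n(Tf)^+) = n(Tf)^+$ for all $n$, which means $(Tf)^+ \in F_0(\tau_F)$. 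Since $(Tf)^+ \in I$, we get $(Tf)^+ = 0$, so $T$ is positive. For the lattice part, take $f,g \in E^+$ with $f\wedge g=0$. We want $Tf\wedge Tg = 0$. To do this, I would use the extended-truncation identity $\widetilde\sigma(x)\wedge y = x\wedge\widetilde\sigma(y)$ on the elements $n f - m g$, scaled by $n,m\in\N$, together with $\widetilde{\tau_E}(nf - mg) = \tau_E(nf) - mg$ (disjointness). Applying $T$ then yields $\widetilde{\tau_F}(nTf - mTg) = T\tau_E(nf) - mTg$. Comparing positive parts and using positivity of $T$, I expect to obtain $\tau_F(n(Tf\wedge Tg)) = n(Tf\wedge Tg)$ for all $n$. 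This would place $Tf\wedge Tg$ in $F_0(\tau_F)\cap I = \{0\}$. A positive operator preserving disjointness is a Riesz homomorphism, which finishes the proof.

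The main obstacle is this last computation: extracting from the intertwining relation that $n(Tf\wedge Tg)$ is a fixed point of $\tau_F$. If the direct manipulation fails, I would fall back on exactly the internal step of the proof of Theorem~\ref{thm:BH} in \cite{Boulabiar-Hajji-MJM}. Its conclusion is presumably reached by showing some positive element $k$ satisfies $\tau_F(nk)=nk$ for all $n$ and then invoking $(\tau_3)$. Here I would replace that invocation by the observation that $k$ is built from $T(E)$ by lattice operations, hence lies in $I$, so that $k\in I\cap F_0(\tau_F)=\{0\}$.
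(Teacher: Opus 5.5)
Your positivity step is correct and complete. For $f\le 0$ one has $\widetilde{\tau_E}(nf)=nf$, so the intertwining identity gives $\tau_F(n(Tf)^{+})=n(Tf)^{+}$ for all $n$, and hence $(Tf)^{+}\in I\cap F_0(\tau_F)=\{0\}$.

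The gap is the disjointness step. It carries the real content, and you only state it as an expectation (``I expect to obtain''). Your fallback does not fill it: it rests on an unverified guess about how the proof of Theorem~\ref{thm:BH} is organized, namely that it ends by exhibiting some $k$, built from $T(E)$ by lattice operations, with $\tau_F(nk)=nk$ for all $n$.

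The missing observation is that you need not reopen that proof; this is exactly the paper's argument.
\begin{enumerate}[label=(\alph*)]
\item Since $I$ is an ideal and $0\le\tau_F(h)\le h$ for $h\in I^{+}$, the map $\tau_F$ sends $I^{+}$ into $I^{+}$, so $\tau_F|_{I^{+}}$ is a truncation on the Riesz space $I$.
\item Its band of truncation infinitesimals is $I\cap F_0(\tau_F)=\{0\}$, so it is Archimedean.
\item Its extended truncation is the restriction of $\widetilde{\tau_F}$ to $I$, so $T\colon E\to I$ is still a truncation homomorphism.
\end{enumerate}
Theorem~\ref{thm:BH} applied to $T\colon E\to I$ then gives a Riesz homomorphism. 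It is also one into $F$, because $I$ is a Riesz subspace of $F$.

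Alternatively, your direct computation can be finished with Lemma~\ref{lem:birkhoff}(iii). Let $f,g\in E^{+}$ with $f\wedge g=0$, and put $a=Tf$, $b=Tg$, both positive by your first step. The case $m=0$ of your identity gives $T\tau_E(nf)=\tau_F(na)$. Now write $x=na$, $y=mb$, $c=x\wedge y$. The identity for $nf-mg$ becomes
\[
\tau_F(x)-y=\tau_F\bigl((x-y)^{+}\bigr)-(x-y)^{-}=\tau_F(x-c)-(y-c),
\]
that is, $\tau_F(x)-\tau_F(x-c)=c$. Lemma~\ref{lem:birkhoff}(iii) then gives $c\le\tau_F(c)\le c$. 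Taking $m=n$ yields $\tau_F(n(a\wedge b))=n(a\wedge b)$ for all $n$, so $a\wedge b\in I\cap F_0(\tau_F)=\{0\}$. Your closing remark about positive disjointness-preserving operators then finishes the proof.

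This second route is self-contained and in effect reproves Theorem~\ref{thm:BH}. The paper's route is shorter but uses that theorem as a black box. As written, however, your proposal completes neither.
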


% ==============================================================
% 4. PROOFS
% ==============================================================
\section{Proofs}
\label{sec:proofs}

\subsection{Proofs for Section~\ref{subsec:results-tbpp}}

\begin{proof}[Proof of Proposition~\ref{prop:bands}]
(i) The map $\tau_w$ is a truncation, since $(w \wedge f) \wedge g = f \wedge (w \wedge g)$ for all $f,g \in E^+$, and its set of fixed points is $[\tau_w] = \{f \in E^{+} : f \le w\}$. The band generated by this set is precisely the principal band $B_w$.

(ii) We first record the elementary identity: if $a, b, c \in E^{+}$ and $a \wedge c = 0$, then
\[
a \wedge (b + c) = a \wedge b. \tag{2}
\]
Let $f, g \in E^{+}$ and write $f = f_1 + f_2$, $g = g_1 + g_2$ with $f_1, g_1 \in B^{+}$ and $f_2, g_2 \in (B\dd)^{+}$. Since $f_1 \wedge g_2 = 0 = g_1 \wedge f_2$, two applications of (2) give
\[
P(f) \wedge g = f_1 \wedge (g_1 + g_2) = f_1 \wedge g_1 = (f_1+f_2)\wedge g_1 = f \wedge P(g),
\]
so $P$ restricts to a truncation on $E$. Clearly $P(f) = f$ if and only if $f \in B$, so $[P] = B^{+}$ and $B_{[P]} = B$.

(iii) If $E$ has the projection property, every band of $E$, in particular every truncation band, is a projection band. If $E$ has the truncation band projection property, then by (i) every principal band of $E$ is a projection band.
\end{proof}

\begin{proof}[Proof of Theorem~\ref{thm:tbpp-equals-ppp}]
Necessity is contained in Proposition~\ref{prop:bands}(iii). For sufficiency, assume that $E$ has the principal projection property, let $\tau$ be a truncation on $E$, and put $B = B_{[\tau]}$. By the criterion for projection bands recalled in Section~\ref{subsec:projection}, it suffices to show that, for every $z \in E^{+}$, the supremum $\sup\big(B^{+} \cap [0,z]\big)$ exists in $E$ and belongs to $B$. Fix $z \in E^{+}$.

\emph{Step 1: the band generated by $\tau(z)$.} The principal band $B_{\tau(z)}$ is a projection band, by the principal projection property. Write $P_z$ for its band projection. As a projection band, $B_{\tau(z)}$ equals its second disjoint complement, so taking disjoint complements in $\{\tau(z)\} \subseteq B_{\tau(z)} \subseteq \{\tau(z)\}^{dd}$ gives
\[
B_{\tau(z)}\dd = \{\tau(z)\}\dd, \qquad B_{\tau(z)} = \{\tau(z)\}^{dd}.
\]

\emph{Step 2: the trace on $[0,z]$.} We claim that
\[
B^{+} \cap [0,z] = B_{\tau(z)}^{+} \cap [0,z]. \tag{3}
\]
The inclusion $\supseteq$ is clear since $\tau(z) \in [\tau]$ (Lemma~\ref{lem:truncation-basics}(i)), so $B_{\tau(z)} \subseteq B$.

For the converse, let $b \in B^{+}$ with $b \le z$, let $h \in E$ be disjoint from $\tau(z)$, and put $c = |h| \wedge b$. From $c \le b \le z$ and Lemma~\ref{lem:truncation-basics}(ii) we get $\tau(c) \le \tau(z)$, while $\tau(c) \le c \le |h|$ by Lemma~\ref{lem:truncation-basics}(i). Hence
\[
\tau(c) \le |h| \wedge \tau(z) = 0,
\]
and Lemma~\ref{lem:truncation-basics}(iv) shows that $c$ is disjoint from $[\tau]$. Since $[\tau] \subseteq B \subseteq [\tau]^{dd}$, taking disjoint complements gives $B\dd = [\tau]\dd$, so $c \in B\dd$. As also $0 \le c \le b \in B$ and $B$ is an ideal, $c \in B$. Therefore $c \in B \cap B\dd = \{0\}$, i.e., $|h| \wedge b = 0$. Since $h$ was an arbitrary element disjoint from $\tau(z)$, it follows that $b \in \{\tau(z)\}^{dd} = B_{\tau(z)}$, proving (3).

\emph{Step 3: conclusion.} The element $P_z(z)$ lies in $B_{\tau(z)}^{+} \cap [0,z]$, and every $b \in B_{\tau(z)}^{+}$ with $b \le z$ satisfies $b = P_z(b) \le P_z(z)$, so $P_z(z)$ is the largest element of $B_{\tau(z)}^{+} \cap [0,z]$. By (3),
\[
\sup\big(B^{+} \cap [0,z]\big) = \sup\big(B_{\tau(z)}^{+} \cap [0,z]\big) = P_z(z) \in B_{\tau(z)} \subseteq B.
\]
As $z$ was arbitrary, $B$ is a projection band, and the band projection of $z$ onto $B$ equals $P_z(z)$. Since $E$ is Archimedean, by the principal projection property, applying (1) to the principal band $B_{\tau(z)}$ (generated by $\tau(z)$) gives
\[
P_z(z) = \sup_{n} \big( z \wedge n\,\tau(z) \big),
\]
which completes the proof.
\end{proof}

\subsection{Proofs for Section~\ref{subsec:results-am-al}}

\begin{proof}[Proof of Lemma~\ref{lem:direct-sum-norms}]
Write $P_Y$ and $P_Z$ for the band projections onto $Y$ and $Z$, which are Riesz homomorphisms, and recall that lattice operations in $X$ are computed coordinatewise: for $y, y' \in Y$ and $z, z' \in Z$,
\[
(y+z) \wedge (y'+z') = (y \wedge y') + (z \wedge z'), \qquad (y+z) \vee (y'+z') = (y \vee y') + (z \vee z').
\]
In particular, $y+z$ and $y'+z'$ are disjoint if and only if $y \wedge y' = 0$ and $z \wedge z' = 0$.

Let $\|\cdot\|_Y$ and $\|\cdot\|_Z$ be lattice norms on $Y$ and $Z$, and set $\|y+z\| := \|y\|_Y + \|z\|_Z$ (resp.\ $\|y\|_Y \vee \|z\|_Z$). Then $\|\cdot\|$ is a lattice norm on $X$, since $|v| \le |w|$ implies $|P_Y v| = P_Y|v| \le P_Y|w| = |P_Y w|$, and likewise for $P_Z$. For disjoint positive elements $y+z$ and $y'+z'$ of $X$ one gets
\begin{align*}
\|(y+z)+(y'+z')\| &= \|y+y'\|_Y + \|z+z'\|_Z \\
&= \|y\|_Y+\|y'\|_Y+\|z\|_Z+\|z'\|_Z = \|y+z\|+\|y'+z'\|
\end{align*}
in the additive case, and
\begin{align*}
\|(y+z) \vee (y'+z')\| &= \|y \vee y'\|_Y \vee \|z \vee z'\|_Z \\
&= \|y\|_Y \vee \|y'\|_Y \vee \|z\|_Z \vee \|z'\|_Z = \|y+z\| \vee \|y'+z'\|
\end{align*}
in the $M$-norm case. So $\|\cdot\|$ is additive on disjoint positive elements, or an $M$-norm, whenever $\|\cdot\|_Y$ and $\|\cdot\|_Z$ are. It is also complete when $\|\cdot\|_Y$ and $\|\cdot\|_Z$ are: a $\|\cdot\|$-Cauchy sequence in $X$ has $\|\cdot\|_Y$-Cauchy and $\|\cdot\|_Z$-Cauchy components, by $\|P_Yv\|_Y, \|P_Zv\|_Z \le \|v\|$, and convergent components recombine. This gives the ``if'' parts of (i) and (ii).

Conversely, suppose $\|\cdot\|$ is an $AL$- or $AM$-norm on $X$. Its restrictions $\|\cdot\|_Y$ and $\|\cdot\|_Z$ to $Y$ and $Z$, which are norm closed since bands are norm closed in a Banach lattice, inherit the corresponding property. For $y\in Y$ and $z\in Z$, which are disjoint, $|y+z|=|y|+|z|=|y|\vee|z|$, so additivity, or the $M$-norm identity, of $\|\cdot\|$ forces $\|y+z\|=\|y\|_Y+\|z\|_Z$, or $\|y+z\|=\|y\|_Y\vee\|z\|_Z$. This proves (i) and (ii).

For (iii), let $\|\cdot\|$ be a lattice norm on $X$ with restrictions $\|\cdot\|_Y$ and $\|\cdot\|_Z$. Since $|P_Y v| \le |v|$ and $|P_Z v| \le |v|$,
\[
\max\big(\|P_Y v\|_Y, \|P_Z v\|_Z\big) \le \|v\| \le \|P_Y v\|_Y + \|P_Z v\|_Z \qquad (v \in X),
\]
so $\|\cdot\|$ is equivalent to the norm $y+z \mapsto \|y\|_Y + \|z\|_Z$ of the first part. Hence $X$ is complete if and only if $Y$ and $Z$ are, by the first part in one direction and by the norm closedness of $Y$ and $Z$ in the other. If $Y$ and $Z$ are $KB$-spaces and $(v_n)$ is an increasing norm bounded sequence in $X^{+}$, then $(P_Y v_n)$ and $(P_Z v_n)$ are increasing norm bounded sequences in $Y^{+}$ and $Z^{+}$, since $P_Y$ and $P_Z$ are positive, hence norm convergent, and $v_n = P_Y v_n + P_Z v_n$ converges. Conversely, if $X$ is a $KB$-space, an increasing norm bounded sequence in $Y^{+}$ converges in $X$, and its limit lies in the closed set $Y$, so $Y$ is a $KB$-space, and likewise $Z$.
\end{proof}

\begin{proof}[Proof of Theorem~\ref{thm:unitality-criterion}]
If $\tau$ is unital with truncation unit $u$, then $\tau(u) = u \wedge u = u$, so $u \in [\tau]$. Also $f = \tau(f) = u \wedge f \le u$ for every $f \in [\tau]$. Hence $\sup[\tau] = u$. Conversely, suppose $u := \sup[\tau]$ exists. The set $[\tau] = \tau(E^{+})$ is upward directed: given $\tau(f), \tau(g) \in [\tau]$, the element $\tau(f \vee g)$ lies in $[\tau]$ and dominates both, by Lemma~\ref{lem:truncation-basics}(i) and (ii). So the infinite distributive law for suprema gives, for every $f \in E^{+}$,
\[
u \wedge f = \sup_{g \in E^{+}} \big( \tau(g) \wedge f \big) = \sup_{g \in E^{+}} \big( g \wedge \tau(f) \big) = \tau(f),
\]
using $(\tau_1)$ in the form $\tau(g)\wedge f = g \wedge \tau(f)$ for the middle equality, and the fact that $g \wedge \tau(f) \le \tau(f)$ for all $g$, with equality at $g = \tau(f)$, for the last. (The distributive law holds in every Riesz space: if $v \ge \tau(g) \wedge f$ for all $g$, then $\tau(g) = \tau(g) \wedge f + \tau(g) \vee f - f \le v + u \vee f - f$ for all $g$, hence $u \le v + u \vee f - f$, that is, $u \wedge f = u + f - u \vee f \le v$.) So $\tau$ is unital with unit $u$.

For the consequence: if $E$ is a Banach lattice with the Levi property and $\tau$ is bounded, the upward directed set $[\tau] = \tau(E^{+})$ is norm bounded, so it has a supremum in $E$ by the Levi property, and $\tau$ is unital by the first part.

$KB$-spaces have the Levi property, by Theorem~\ref{thm:classical}(i). Dual Banach lattices have it as well: every norm bounded, upward directed net of positive functionals is also weak-* bounded, so it has a weak-* limit point that dominates it, and one checks this limit point is its supremum. Both $\ell^\infty$ and $L^\infty(\mu)$, for $\sigma$-finite $\mu$, are dual Banach lattices.
\end{proof}

\begin{proof}[Proof of Theorem~\ref{thm:c0-embeds}]
By Theorem~\ref{thm:unitality-criterion}, $E$ is not a $KB$-space (else the bounded truncation would be unital, contrary to hypothesis), so by Theorem~\ref{thm:classical}(ii) it contains a closed Riesz subspace lattice isomorphic to $c_0$. The remaining assertions follow since reflexivity and the $AL$ property each imply the $KB$ property (Theorem~\ref{thm:classical}(iii)), and weak sequential completeness is equivalent to it (Theorem~\ref{thm:classical}(ii)).
\end{proof}

\begin{proof}[Proof of Corollary~\ref{cor:never-AL}]
Suppose that $\|\cdot\|_U$ is a truncation unitization norm making $E \oplus \R$ an $AL$-space. If $E$ is unital with truncation unit $u$, then $u$ and $\mathbf 1 - u$ are disjoint positive elements of $E \oplus \R$ (Section~\ref{subsec:unitization}), so
\[
1 = \|\mathbf 1\|_U = \|u + (\mathbf 1 - u)\|_U = \|u\|_E + \|\mathbf 1 - u\|_U = 1 + \|\mathbf 1 - u\|_U,
\]
which forces $\mathbf 1 = u \in E$, a contradiction since $\mathbf 1 \notin E$. If $E$ is nonunital, then $E$ is a Banach lattice by Theorem~\ref{thm:extreme-norms}, and its norm, being the restriction of $\|\cdot\|_U$, is additive on disjoint positive elements, so $E$ is an $AL$-space with bounded truncation. This contradicts Theorem~\ref{thm:c0-embeds}.
\end{proof}

\begin{proof}[Proof of Theorem~\ref{thm:completion-unital}]
By Lemma~\ref{lem:birkhoff}(iii), $\tau$ is nonexpansive on $E^{+}$, hence uniformly continuous. Also $E^{+}$ is norm dense in $\overline E^{+}$. Indeed, if $f_n \to f$ in $\overline E$ with $f_n \in E$, then $f_n^{+} \to f^{+}$ by norm continuity of the lattice operations, and $f_n^+ \in E^+$.

So $\tau$ extends uniquely to a uniformly continuous $\overline\tau : \overline E^{+} \to \overline E^{+}$, the identity $(\tau_1)$ passes to the limit by continuity of the lattice operations, and $\sup\{\|\overline\tau(f)\| : f \in \overline E^{+}\} = 1$ follows from density of $E^+$ together with the normalization on $E$. Since every truncation on $\overline E$ is nonexpansive, again by Lemma~\ref{lem:birkhoff}(iii), $\overline\tau$ is the only truncation on $\overline E$ that extends $\tau$.

Now suppose $E$ is nonunital and $\|\cdot\|_E$ is additive on disjoint positive elements. For $x, y \in \overline E^{+}$ with $x \wedge y = 0$, choose $x_n, y_n \in E^{+}$ with $x_n \to x$ and $y_n \to y$. The elements $a_n := x_n - x_n \wedge y_n$ and $b_n := y_n - x_n \wedge y_n$ are positive and disjoint, and they converge to $x - x\wedge y = x$ and to $y$ respectively, so
\[
\|x+y\| = \lim_n \|a_n+b_n\| = \lim_n (\|a_n\|+\|b_n\|) = \|x\|+\|y\|.
\]
Hence $\overline E$ is an $AL$-space, so a $KB$-space by Theorem~\ref{thm:classical}(iii), and $\overline\tau$ is bounded by the first part. Theorem~\ref{thm:unitality-criterion} then makes $\overline\tau$ unital, with some truncation unit $\overline u$.

If $\overline u \in E$, then $\tau(f) = \overline\tau(f) = \overline u \wedge f$ for all $f \in E^+$ would make $\tau$ unital, contrary to hypothesis. Hence $\overline u \in \overline E \setminus E$.
\end{proof}

\begin{proof}[Proof of Theorem~\ref{thm:smallest-is-M-norm}]
Let $\varphi, \psi \in (E \oplus \R)^{+}$ with $\varphi \wedge \psi = 0$. Since $\|\cdot\|_S$ is a lattice norm and $\varphi, \psi \le \varphi \vee \psi$, we have $\|\varphi\|_S \vee \|\psi\|_S \le \|\varphi \vee \psi\|_S$.

For the reverse inequality, let $g \in E$ with $|g| \le \varphi \vee \psi$. By distributivity, $|g| = |g|\wedge(\varphi\vee\psi) = (|g|\wedge\varphi)\vee(|g|\wedge\psi)$. Put $g_1 = |g|\wedge\varphi$ and $g_2 = |g|\wedge\psi$. Since $E$ is an ideal of $E\oplus\R$ and $0 \le g_i \le |g| \in E^{+}$, both $g_1$ and $g_2$ lie in $E^{+}$, and they are disjoint, since $0 \le g_1 \wedge g_2 \le \varphi \wedge \psi = 0$. As $\|\cdot\|_E$ is an $M$-norm and $|g| = g_1 \vee g_2$,
\[
\|g\|_E = \|g_1 \vee g_2\|_E = \|g_1\|_E \vee \|g_2\|_E \le \|\varphi\|_S \vee \|\psi\|_S,
\]
the last inequality because $g_1 \le \varphi$, $g_2 \le \psi$, and $\|\cdot\|_S$ is defined as a supremum over such elements. Taking the supremum over admissible $g$ gives $\|\varphi \vee \psi\|_S \le \|\varphi\|_S \vee \|\psi\|_S$, completing the proof.
\end{proof}

\begin{proof}[Verification for Example~\ref{ex:S-vs-L}]
(i) The identification $c_0 \oplus \R \cong c$ follows from Section~\ref{subsec:unitization}, since $c_0$ is nonunital and $\mathbf 1$ is a weak order unit of $\ell^\infty$ implementing $\tau$. Every $g \in c_0$ with $|g|\le|f|$ satisfies $\|g\|_\infty \le \|f\|_\infty$, while truncating $f\in c$ to finitely many coordinates approximates $\|f\|_\infty$ from below within $c_0$. So $\|\cdot\|_S = \|\cdot\|_\infty$ on $c$. For $f \in c$ with limit $\ell$,
\[
\|f\|_L = \|(|f|-|\ell|\mathbf1)^{+}\|_\infty + |\ell| = (\sup_n|f_n| - |\ell|) + |\ell| = \|f\|_\infty
\]
since $\sup_n|f_n|\ge|\ell|$. So $\|\cdot\|_L = \|\cdot\|_\infty$ as well, and all truncation unitization norms on $c_0\oplus\R$ coincide with the supremum norm of $c$, which is a classical $M$-norm.

(ii) Identifying $E \oplus \R$ with $F = E + \R e \subset C_b(\R)$ via $x+r \mapsto x+re$, one computes, for $x \in E$, $r\ge0$ with $x+r\in(E\oplus\R)^{+}$,
\[
\|x+r\|_S = \|x+re\|_\infty, \qquad \|x+r\|_L = \|x^{+}\|_\infty + r
\]
(The first holds since $\|g\|_\infty \le \|x+re\|_\infty$ for $|g|\le x+re$, with near-equality realized by truncating $x+re$ to a large compact interval. The second follows from Theorem~\ref{thm:extreme-norms}, since $|x+r|-r\mathbf1 = x$ in $E\oplus\R$.) These formulas also give the equivalence of the two norms: $x + re \ge x^{+}$ and $(x+re)(t) \to r$ as $t \to -\infty$, so $\|x+r\|_L \le 2\|x+r\|_S$.

Taking $x(t) = (1-|t-2|)^{+}$ and $\varphi = x+1$, direct computation of $x+e$ on $(-\infty,0], [0,1], [1,2], [2,3], [3,\infty)$ gives
\[
\|\varphi\|_S = x(2)+e(2) = 4/3 \qquad \text{while} \qquad \|\varphi\|_L = \|x\|_\infty+1 = 2,
\]
so $\|\cdot\|_S \ne \|\cdot\|_L$.

Next, choose $k \in E$ with $0\le k\le1$ and $k=1$ on $[1,3]$, supported in $[1/2,7/2]$. Set $\psi = \mathbf1 - ek$, corresponding to $e(1-k)$. This vanishes on $[1,3] \supseteq \operatorname{supp} x$, so $x\wedge\psi = 0$. One finds that $x \vee \psi$ corresponds to $(x-ek)+\mathbf1$, with
\[
\|x\vee\psi\|_L = \|(x-ek)^{+}\|_\infty+1 = (x(2)-e(2))+1 = 5/3.
\]
Meanwhile $\|x\|_L = 1$ and $\|\psi\|_L = \|(-ek)^{+}\|_\infty+1=1$, so $\|x\vee\psi\|_L = 5/3 > 1 = \|x\|_L\vee\|\psi\|_L$. So $\|\cdot\|_L$ is not an $M$-norm. By contrast, $\|x\vee\psi\|_S = \|x+e(1-k)\|_\infty = 1 = \|x\|_S\vee\|\psi\|_S$, consistent with Theorem~\ref{thm:smallest-is-M-norm}.
\end{proof}

\subsection{Proofs for Section~\ref{subsec:results-hom}}

\begin{proof}[Proof of Lemma~\ref{lem:c0-hom}]
For $f,g \in c_0^+$, $(f\wedge\mathbf1)\wedge g = f\wedge g\wedge\mathbf1 = f\wedge(g\wedge\mathbf1)$ in $\R^\N$, so $\tau_E$ is a truncation.

If $f \in c_0^+$ satisfies $\tau_E(nf)=nf$ for all $n$, then $nf \le \mathbf1$, i.e., $f \le \tfrac1n\mathbf1$ for all $n$, so $f=0$: $\tau_E$ is Archimedean.

For $0 \ne f \in c_0^+$, $f$ is bounded, so $\lambda f \le \mathbf 1$ for $\lambda = 1/\sup_n f(n) > 0$, giving $\tau_E(\lambda f)=\lambda f$: $\tau_E$ is strong.

Finally, $\widetilde{\tau_E}(f) = \tau_E(f^+) - f^- = (f^+\wedge\mathbf1)-f^-$, so
\[
f - \widetilde{\tau_E}(f) = f^+ - (f^+\wedge\mathbf1) = (f^+-\mathbf1)^+,
\]
which vanishes off the finite set $\{n : f^+(n) \ge 1\}$ (as $f^+ \in c_0$), hence lies in $c_{00}$.
\end{proof}

\begin{proof}[Proof of Theorem~\ref{thm:hom-counterexample}]
$T$ is linear and surjective since $\pi$ is. For $f \in c_0$, Lemma~\ref{lem:c0-hom} gives $f - \widetilde{\tau_E}(f) \in c_{00} = \ker\pi$, so
\[
T(\widetilde{\tau_E}(f)) = -\pi(\widetilde{\tau_E}(f)) = -\pi(f) = T(f).
\]
Also $\widetilde{\tau_F} = \mathrm{id}_F$, by Lemma~\ref{lem:identity-truncation} applied to $F$, so $\widetilde{\tau_F}(T(f)) = T(f)$. Hence $T \circ \widetilde{\tau_E} = \widetilde{\tau_F}\circ T$, and $T$ is a truncation homomorphism.

For the sequence $f = (1,\tfrac12,\tfrac13,\dots) \in c_0^+$, $f \notin c_{00}$, so $[f] := \pi(f) > 0$ in $c_0/c_{00}$ (as $\pi$ is a Riesz homomorphism with kernel $c_{00}$) and $T(f) = -[f] < 0$. Then
\[
T(f\vee 0) = T(f) = -[f] \ne 0 = (-[f])\vee 0 = T(f)\vee T(0),
\]
so $T$ fails to preserve finite suprema (hence is not a Riesz homomorphism) and, the same computation showing $T(f) < 0 \le T(0)$ with $f \ge 0$, $T$ is not positive.
\end{proof}

\begin{proof}[Proof of Lemma~\ref{lem:test-domain}]
Since $f\vee h = (f-h)^++h$, closure of $D$ under $f \mapsto f^+$ suffices to show that $D$ is a Riesz subspace. Write $f = \alpha u + g \in D$. If $\alpha>0$, then $f$ agrees with $\alpha u>0$ off the finite set $\operatorname{supp}g$, so $f^- \in c_{00}$ and $f^+ = \alpha u + (g+f^-) \in D$. If $\alpha<0$, then $f^+ \in c_{00} \subseteq D$. If $\alpha=0$, then $f^+ = g^+ \in c_{00}$.

For the truncation claims, Lemma~\ref{lem:c0-hom} applied inside $c_0$ shows $f\wedge\mathbf1 = f-(f-\mathbf1)^+$ with $(f-\mathbf1)^+ \in c_{00} \subseteq D$ for $f \in D^+$, so $\tau_D$ maps $D^+$ into $D^+$, and the truncation identity, Archimedeanness, strongness, and the formula for $f-\widetilde{\tau_D}(f)$ all follow exactly as in Lemma~\ref{lem:c0-hom}, computed inside $D$.
\end{proof}

\begin{proof}[Proof of Theorem~\ref{thm:rank-one}]
$T$ is linear, being $\lambda$ composed with $t \mapsto -tg_0$. By Lemma~\ref{lem:test-domain}, $f - \widetilde{\tau_D}(f) \in c_{00} \subseteq \ker\lambda$ for $f \in D$, so $\lambda(\widetilde{\tau_D}(f)) = \lambda(f)$ and $T(\widetilde{\tau_D}(f)) = -\lambda(f)g_0 = T(f)$. Since $T(f)$ is a scalar multiple $c g_0$ of $g_0 \in F_0(\tau_F)$, we have $\widetilde{\tau_F}(T(f)) = T(f)$. Indeed, for $c \ge 0$ we get $c g_0 \le \lceil c \rceil g_0 \in [\tau_F]$, so $\tau_F(c g_0) = c g_0$ by Lemma~\ref{lem:truncation-basics}(iii), while for $c < 0$ we get $\widetilde{\tau_F}(c g_0) = \tau_F(0) - |c| g_0 = c g_0$. Hence $T\circ\widetilde{\tau_D} = \widetilde{\tau_F}\circ T$.

Finally $u \in D^+$, $\lambda(u)=1$, so $T(u\vee0)=T(u)=-g_0 < 0 = T(u)\vee T(0)$: $T$ is neither a Riesz homomorphism nor positive.
\end{proof}

\begin{proof}[Proof of Theorem~\ref{thm:optimality}]
(i)$\Rightarrow$(ii) is Theorem~\ref{thm:BH}. Implication (ii)$\Rightarrow$(iii) holds since Riesz homomorphisms are positive. Implications (ii)$\Rightarrow$(iv) and (iii)$\Rightarrow$(v) follow by specializing $E = D$.

Finally, suppose (i) fails. Theorem~\ref{thm:rank-one} then gives a truncation homomorphism $(D,\tau_D)\to F$ that is neither a Riesz homomorphism nor positive, so both (iv) and (v) fail. This gives (iv)$\Rightarrow$(i) and (v)$\Rightarrow$(i).
\end{proof}

\begin{proof}[Proof of Proposition~\ref{prop:induced-truncation}]
If $g_1,g_2 \in F^+$ with $q(g_1)=q(g_2)$, then $|g_1-g_2| \in B^+$, so $\tau(|g_1-g_2|) = |g_1-g_2|$, this being the $n=1$ case of membership in $B$. Lemma~\ref{lem:birkhoff}(iii) then gives $|\tau(g_1)-\tau(g_2)| \le \tau(|g_1-g_2|) = |g_1-g_2| \in B$. Since $B$ is solid, this gives $\tau(g_1)-\tau(g_2) \in B$, that is, $q(\tau(g_1))=q(\tau(g_2))$. Since $(F/B)^+ = q(F^+)$, this defines $\overline\tau$ unambiguously.

For $x=q(f), y=q(g) \in (F/B)^+$ ($f,g\in F^+$), $q$ being a Riesz homomorphism gives $\overline\tau(x)\wedge y = q(\tau(f)\wedge g) = q(f\wedge\tau(g)) = x \wedge \overline\tau(y)$, so $\overline\tau$ is a truncation.

Finally, for $h \in F$,
\[
q(\widetilde\tau(h)) = q(\tau(h^+))-q(h^-) = \overline\tau(q(h)^+)-q(h)^- = \widetilde{\overline\tau}(q(h))
\]
(using $q(h^{\pm})=q(h)^{\pm}$), so $q$ is a truncation homomorphism, and it is a Riesz homomorphism by construction.
\end{proof}

\begin{proof}[Proof of Lemma~\ref{lem:reduction}]
The composition of two truncation homomorphisms is a truncation homomorphism (immediate from the definition), so $q\circ T$ is a truncation homomorphism from $(E,\tau_E)$ into $(F/B,\overline\tau)$ by Proposition~\ref{prop:induced-truncation}. As $\overline\tau$ is Archimedean by hypothesis, Theorem~\ref{thm:BH} gives that $q\circ T$ is a Riesz homomorphism.
\end{proof}

\begin{proof}[Proof of Theorem~\ref{thm:main-hom}]
Write $\tau(f) = pf + e\wedge f$ for $f \in F^+$, via the structure theorem, with $p$ a component of the weak unit $w$ of $F^u$ and $e \perp p$ in $(F^u)^+$. For $h \in (F^u)^+$, write $h = h_p + h_d$ with $h_p = ph$ and $h_d = (w-p)h$. Multiplication by $p$ is the band projection onto the band $B_p$ generated by $p$, and lattice operations decompose along $F^u = B_p \oplus \{p\}^d$. Indeed, since $p \wedge (w-p) = 0$, the $f$-algebra property gives $ph \perp w-p$ and $(w-p)h \perp p$ for $h \in (F^u)^{+}$, while $\{p\}^d = B_{w-p}$ and $\{w-p\}^d = B_p$ because $w$ is a weak order unit of the Dedekind complete space $F^u$, so that $h = ph + (w-p)h$ is the decomposition of $h$ along $B_p \oplus \{p\}^d$. Now let $f \in F^+$ and $n \ge 1$. As $e \perp p$, also $e \perp nf_p$, so identity (2) gives $e \wedge nf = e \wedge (nf_p + nf_d) = e \wedge nf_d$, and therefore
\[
nf - \tau(nf) = nf_d - e \wedge nf_d = (nf_d - e)^+ \in \{p\}^d.
\]
From this one shows $F_0(\tau) = F \cap B_p$, both sets being solid, so that it suffices to compare their positive parts. If $f \in F_0(\tau)^+$, then $nf_d \le e$ for all $n$, and $F^u$ Archimedean forces $f_d=0$, that is, $f = pf \in B_p$. Conversely, $f \in F^+\cap B_p$ gives $f_d=0$, hence $nf-\tau(nf)=0$ for all $n$.

Now let $x = q(g) \in F_0(\overline\tau)^+$, with $g\in F^+$. Since $\overline\tau(nq(g)) = nq(g)$, we get $ng-\tau(ng) \in B := F_0(\tau)$ for all $n$, so
\[
ng - \tau(ng) \in B\cap\{p\}^d = (F\cap B_p)\cap\{p\}^d = \{0\}
\]
(as $B_p \cap \{p\}^d = \{0\}$), that is, $ng_d \le e$ for all $n$. Archimedeanness of $F^u$ then gives $g_d=0$, so $g \in F\cap B_p = B$, that is, $x = q(g) = 0$. Hence $F_0(\overline\tau) = \{0\}$, and $\overline\tau$ is Archimedean.
\end{proof}

\begin{proof}[Proof of Corollary~\ref{cor:modulo-infinitesimals}]
Immediate from Theorem~\ref{thm:main-hom} and Lemma~\ref{lem:reduction}. The displayed reformulation follows since $q(T(f\vee g)) = q(Tf)\vee q(Tg) = q(Tf\vee Tg)$ if and only if $T(f\vee g)-Tf\vee Tg \in \ker q = F_0(\tau_F)$. Likewise, $q(Tf)\ge0$ if and only if $q((Tf)^-) = 0$, if and only if $(Tf)^- \in F_0(\tau_F)$.
\end{proof}

\begin{proof}[Proof of Proposition~\ref{prop:totally-ordered}]
Here $\tau(f)=v\wedge f$, so $B=F_0(\tau) = \{x \in F : n|x|\le v \text{ for all } n\}$. Let $x=q(g)\in F_0(\overline\tau)^+$, with $g\in F^+$. For every $n$, $\overline\tau(nx) = nx$ reads $q(ng - \tau(ng)) = 0$, that is, $(ng-v)^+ = ng - v \wedge ng \in B$. Suppose that $mg \not\le v$ for some $m$. Total ordering gives $v < mg$, so $h:=(mg-v)^+=mg-v>0$ lies in $B$. Taking $n=2m$ and using $2mg-v=v+2h\ge0$ gives $v+2h \in B$. Solidity of $B$, with $0\le v\le v+2h$, then forces $v \in B$, that is, $nv\le v$ for all $n$, so $v=0$. But then $\tau$ is the zero truncation with $B=\{0\}$, contradicting $h>0$ and $h\in B$.

Hence $ng\le v$ for all $n$, i.e.\ $g\in B$ and $x=0$.
\end{proof}

\begin{proof}[Proof of Proposition~\ref{prop:ideal-condition}]
Since $T(E)\subseteq I$ and $I$ is an ideal, it is closed under $\tau_F$, because $\tau_F(f) \le f$ for $f \in I^{+}$ (Lemma~\ref{lem:truncation-basics}(i)), so $\tau_F|_{I^+}$ is a truncation on $I$. By hypothesis, $F_0(\tau_F|_{I^+}) = I\cap F_0(\tau_F) = \{0\}$, that is, $\tau_F|_{I^+}$ is Archimedean. Viewed as a map into $I$, $T$ is still a truncation homomorphism, since the extended truncation of $\tau_F|_{I^+}$ is the restriction of $\widetilde{\tau_F}$ to $I$.

Theorem~\ref{thm:BH} then makes $T : E \to I$ a Riesz homomorphism, and since $I$ inherits its lattice operations from $F$, so is $T : E \to F$.
\end{proof}

\section*{Declarations}

\noindent\textbf{Funding.} The authors did not receive support from any organization for the submitted work.

\noindent\textbf{Competing interests.} The authors have no relevant financial or non-financial interests to disclose.

\noindent\textbf{Data availability.} Data sharing is not applicable to this article, as no datasets were generated or analysed during the current study.

% BIBLIOGRAPHY
% ==============================================================

\end{document}